\def\vint_#1{\mathchoice%
          {\mathop{\kern 0.2em\vrule width 0.6em height 0.69678ex depth -0.58065ex
                  \kern -0.8em \intop}\nolimits_{\kern -0.4em#1}}%
          {\mathop{\kern 0.1em\vrule width 0.5em height 0.69678ex depth -0.60387ex
                  \kern -0.6em \intop}\nolimits_{#1}}%
          {\mathop{\kern 0.1em\vrule width 0.5em height 0.69678ex
              depth -0.60387ex
                  \kern -0.6em \intop}\nolimits_{#1}}%
          {\mathop{\kern 0.1em\vrule width 0.5em height 0.69678ex depth -0.60387ex
                  \kern -0.6em \intop}\nolimits_{#1}}}
\def\vintslides_#1{\mathchoice%
          {\mathop{\kern 0.1em\vrule width 0.5em height 0.697ex depth -0.581ex
                  \kern -0.6em \intop}\nolimits_{\kern -0.4em#1}}%
          {\mathop{\kern 0.1em\vrule width 0.3em height 0.697ex depth -0.604ex
                  \kern -0.4em \intop}\nolimits_{#1}}%
          {\mathop{\kern 0.1em\vrule width 0.3em height 0.697ex de pth -0.604ex
                  \kern -0.4em \intop}\nolimits_{#1}}%
          {\mathop{\kern 0.1em\vrule width 0.3em height 0.697ex depth -0.604ex
                  \kern -0.4em \intop}\nolimits_{#1}}}
\DeclareFontFamily{OMX}{yhex}{}
\DeclareFontShape{OMX}{yhex}{m}{n}{<->yhcmex10}{}
\DeclareSymbolFont{yhlargesymbols}{OMX}{yhex}{m}{n}
\DeclareMathAccent{\wideparen}{\mathord}{yhlargesymbols}{"F3}
\numberwithin{equation}{section}
\newtheorem{theorem}{Theorem}[section]
\newtheorem{lemma}[theorem]{Lemma}
\theoremstyle{definition}
\newtheorem{definition}[theorem]{Definition}
\newtheorem{proposition}[theorem]{Proposition}
\newtheorem{example}[theorem]{Example}
\theoremstyle{remark}
\newtheorem{rem}[theorem]{Remark}
\newcommand{\R}{\mathbb{R}}
\newcommand{\N}{\mathbb{N}}
\renewcommand{\H}{\mathcal{H}}
\newcommand{\diam}{\mathrm{diam}}
\newcommand{\conv}{\mathrm{conv}}
\newcommand{\dist}{\mathrm{dist}}
\begin{document}

\title[Sharp estimate on the inner distance in planar domains]
{Sharp estimate on the inner distance\\ in planar domains}

\author{Danka Lu\v{c}i\'c} 
\author{Enrico Pasqualetto}
\author{Tapio Rajala}

\address{University of Jyvaskyla\\
         Department of Mathematics and Statistics \\
         P.O. Box 35 (MaD) \\
         FI-40014 University of Jyvaskyla \\
         Finland}
\email{danka.d.lucic@jyu.fi}
\email{enrico.e.pasqualetto@jyu.fi}
\email{tapio.m.rajala@jyu.fi}

\thanks{All authors partially supported by the Academy of Finland, projects 274372, 307333, 312488, and 314789.}
\subjclass[2000]{Primary 28A75. Secondary 31A15}
\keywords{Inner distance, Painlev\'e length, accessible points}
\date{\today}


\begin{abstract}
 We show that the inner distance inside a bounded planar domain is at most the one-dimensional Hausdorff measure of the boundary of the domain. We prove this sharp result by establishing an improved Painlev\'e length estimate for connected sets and by using the metric removability of totally disconnected sets, proven by Kalmykov, Kovalev, and Rajala. We also give a totally disconnected example showing that for general sets the Painlev\'e length bound $\kappa(E) \le\pi \H^1(E)$ is sharp.
\end{abstract}

\maketitle
%
%
\section{Introduction}
In this paper we continue the study of the internal distance for planar domains. For a domain $\Omega\subset \R^2$, the internal distance $d_\Omega \colon \Omega^2 \to [0,\infty)$ is defined as
\[
d_\Omega(x,y) := \inf\left\{\ell(\gamma)\,:\,\gamma \text{ is a curve connecting }x \text{ to }y\right\},
\]
where $\ell(\gamma)$ denotes the length of the curve $\gamma$.
The internal distance is determined by how much the boundary blocks the curves $\gamma$. One result in this direction was proven in \cite{KKR}: If the complement of the domain $\Omega$ is totally disconnected with finite $\H^1$-measure, then $d_\Omega$ is the Euclidean distance. In other words, totally disconnected closed sets with finite $\H^1$-measure are (metrically) removable.
The proof of this result used the estimate
\begin{equation}\label{eq:known_inner_dist}
d_\Omega(x,y)\leq|x-y|+\frac{\pi}{2}\,\H^1(\partial\Omega).
\end{equation}
We improve \eqref{eq:known_inner_dist} to the following sharp estimate:
\begin{theorem}\label{thm:main}
Let $\Omega \subset \R^2$ be a domain satisfying $\H^1(\partial\Omega)<\infty$. Then the estimate
\begin{equation}\label{eq:inner_dist_unbdd}
d_\Omega(x,y) \le |x-y| + \H^1(E)
\end{equation}
holds for every $x,y\in \Omega$,
where $E \subset \partial\Omega$ is the union of all the connected components of $\partial\Omega$ with positive length.
In the case when $\Omega$ is bounded, the above estimate can be improved to
\begin{equation}\label{eq:inner_dist_bdd}
d_\Omega(x,y) \le \H^1(E).
\end{equation}
\end{theorem}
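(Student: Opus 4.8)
The plan is to deduce the theorem from two facts: the improved Painlev\'e-type estimate for \emph{connected} boundaries (established separately in the paper), and the metric removability of closed totally disconnected sets of finite length from \cite{KKR}. Write $\partial\Omega=E\sqcup Z$, where $E=\bigcup_i E_i$ collects the (at most countably many) connected components $E_i$ of $\partial\Omega$ with $\H^1(E_i)>0$, and $Z=\partial\Omega\setminus E$. Since a connected subset of $\R^2$ with more than one point has positive $\H^1$-measure, every component of $\partial\Omega$ outside $E$ is a singleton; hence $Z$ is totally disconnected and $\H^1(Z)\le\H^1(\partial\Omega)<\infty$. The same reasoning shows $\overline E\setminus E\subseteq Z$, so $\overline E\setminus E$ is totally disconnected as well --- a point that matters because $E$ itself need not be closed.

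I would first reduce to routing curves that only have to avoid $\bigcup_i E_i$. Let $W$ be the connected component of $\R^2\setminus\overline E$ containing $\Omega$. Then $\Omega$ is a connected component of $W\setminus(W\cap\partial\Omega)$, and $W\cap\partial\Omega\subseteq Z$ is relatively closed in $W$, totally disconnected, and of finite $\H^1$-measure; applying the removability result of \cite{KKR} in the open set $W$ gives $d_\Omega(x,y)=d_{W\setminus(W\cap\partial\Omega)}(x,y)=d_W(x,y)$ for all $x,y\in\Omega$ (the first equality because $\Omega$ is a whole component of $W\setminus(W\cap\partial\Omega)$, the second by removability). It therefore suffices to bound $d_W$, and $\partial W\subseteq\overline E$. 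Since $\overline E\setminus E$ is again totally disconnected, a curve in $\R^2$ avoiding $\bigcup_i E_i$ joining $x$ to $y$ can, after a further arbitrarily small perturbation using removability, be made to lie in $W$; so the whole problem reduces to building short curves in $\R^2\setminus\bigcup_i E_i$.

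The heart of the matter is the connected case: if $K$ is a continuum with $\H^1(K)<\infty$, one can pass from $x$ to $y$ around $K$ at additive cost at most $\H^1(K)$ --- the sharp Painlev\'e length bound for connected sets, improving the factor $\pi/2$ in \eqref{eq:known_inner_dist}. The mechanism is that a finite-length continuum is arcwise connected and admits a closed Lipschitz ``tour'' of length $\le 2\H^1(K)$; approaching $K$ near the foot of $x$, following the cheaper of the two tour-arcs to the foot of $y$ --- a sub-arc of length at most $\tfrac12\cdot 2\H^1(K)=\H^1(K)$ --- and leaving towards $y$ yields $d(x,y)\le|x-y|+\H^1(K)$, while when the relevant complementary region is bounded the tour closes into a loop enclosing it, which removes the $|x-y|$ term. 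I expect this step, and in particular keeping the constant exactly $1$ rather than $2$, to be the main obstacle. For the family $\{E_i\}_i$ one then assembles these estimates: for finitely many components the bound follows by successively rerouting around $E_1,E_2,\dots$, arranged so that each component is circumvented on its cheaper side and is ``paid for'' only once, producing the additive term $\sum_i\H^1(E_i)\le\H^1(E)$; for the general case one exhausts $E$ by finitely many components and controls the tail, whose components have total length --- hence total diameter --- as small as desired, so the associated reroutings add a vanishing amount and one passes to the limit. (Equivalently: one checks that the Painlev\'e length is subadditive over separated pieces and combines this with the connected bound.)

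Finally, the bounded estimate \eqref{eq:inner_dist_bdd} is obtained by propagating the bounded case of the connected estimate through the same construction: a bounded domain is enclosed by a continuum contained in its boundary, so the routing of $x$ and $y$ can be closed up along that enclosing structure without ever needing the Euclidean term. Besides the sharp-constant connected estimate, the delicate points to watch are the bookkeeping that makes the finitely-many reroutings globally consistent, the convergence of the countably-many reroutings in the tail, and the non-closedness of $E$ --- handled throughout via removability of the totally disconnected sets $Z$ and $\overline E\setminus E$.
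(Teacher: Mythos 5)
Your overall strategy is recognizably the paper's (dodge finitely many large components using the connected Painlev\'e bound, take the cheaper half, treat the totally disconnected part separately, control the tail by small diameters, close up along the enclosing boundary in the bounded case), but two of your load-bearing steps do not work as stated. First, the removability reduction: the result of \cite{KKR} is for \emph{closed} totally disconnected planar sets of finite length, whereas \(W\cap\partial\Omega\) is only relatively closed in \(W\), and its closure --- likewise the closure of \(\overline E\setminus E\) --- can contain entire components \(E_i\) of positive length, hence is not totally disconnected. So both of your appeals to removability (to get \(d_\Omega=d_W\), and to push a curve avoiding \(\bigcup_i E_i\) into \(W\)) need a localized removability statement you have not supplied. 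The paper never uses removability as a black box in this proof: it perturbs a polygonal curve so that it meets \(\partial\Omega\) in an \(\H^1\)-null set (Proposition \ref{prop:move_points}) and then applies the general Painlev\'e bound, Proposition \ref{prop:sharppainleve}, to the \(\varepsilon\)-small set \(\partial\Omega\cap\overline U\). Second, your connected-case mechanism is flawed as described: the closed ``tour'' of length \(\le 2\,\H^1(K)\) lies \emph{on} \(K\), and a path along \(K\) cannot in general be shadowed by a nearby path in the complement of \(K\) (dense branching destroys any notion of ``one side''). What makes the idea work is to replace \(K\) by a Painlev\'e neighbourhood \(V\) with \(\H^1(\partial V)\le 2\,\H^1(K)+\varepsilon\) whose boundary is a union of Jordan loops, and to follow the cheaper arc of the relevant loop between the first and last hitting points; this is exactly what Theorem \ref{thm:Painleve_conn} is for in the paper's Step 2, and the same device applied to \(\partial W\) in Step 6 is what yields the constant \(1\) and removes the \(|x-y|\) term when \(\Omega\) is bounded.

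The remaining gap is the assembly over infinitely many components, which you assert rather than prove. After rerouting around \(E_1,\dots,E_{k-1}\), the curve still meets infinitely many small components (which may accumulate onto the large ones and onto the curve itself) and the set \(Z\), which can have positive \(\H^1\)-measure; ``each component is paid for only once'' and ``the tail reroutings add a vanishing amount, pass to the limit'' is precisely where the difficulty sits, since a detour around one component can run into others, a single component can be met many times, and a limit of successive reroutings need not exist or stay in \(\Omega\). The paper handles this globally: the Borel--Cantelli choice of \(k\) in \eqref{eq:kcond}, the enclosing balls \(B(z_i,2\,\diam(E_i))\) for tail components of relevant diameter, the topological claim of Step 4 (via Lemma \ref{lem:fact_HH}) that \(x\) and \(y\) lie in the same component of \(U\setminus\partial\Omega\), the Painlev\'e estimate on the small-measure set, and finally the extraction of a curve of length \(\le\H^1(\partial W)/2\) from the cheaper halves of \(\partial W\). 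Without an argument replacing these steps, your outline does not yet yield \eqref{eq:inner_dist_unbdd}, let alone the sharp bounded refinement \eqref{eq:inner_dist_bdd}.
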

The assumption $\H^1(\partial\Omega)< \infty$ in Theorem \ref{thm:main} is used for showing that the totally disconnected part of the boundary is removable. In view of the examples constructed in \cite{HakobyanHerron}, it is at least necessary to assume that the Hausdorff dimension of $\partial\Omega$ is at most one. However, it is not clear if the assumption $\H^1(\partial\Omega)< \infty$ could be relaxed to $\partial\Omega$ having $\sigma$-finite $\H^1$-measure.

The sharpness of the estimate \eqref{eq:inner_dist_unbdd} in the unbounded case is seen simply by taking $\partial \Omega$ to be a line-segment. In the bounded case, the sharpness is seen for example by considering
\[
\Omega = (0,1)^2\setminus \bigcup_{i=1}^n (\{\frac1{2i}\}\times[0,1-1/i]) \cup (\{\frac1{2i+1}\}\times[1/i,1])
\]
for $n$ larger and larger, and by scaling $\Omega$.

As a consequence of Theorem \ref{thm:main}, we obtain the following result:
\begin{theorem}\label{thm:accessibility_thm}
Let \(\Omega\subset\R^2\) be a bounded domain
with \(\H^1(\partial\Omega)<\infty\). Let \(x\in\Omega\)
and \(y\in\partial\Omega\) be given. Then for every \(\varepsilon>0\)
there exists an injective Lipschitz curve \(\gamma\colon[0,1]\to\R^2\)
joining \(x\) to \(y\) such that \(\gamma|_{(0,1)}\subset\Omega\)
and \(\ell(\gamma)\leq\H^1(\partial\Omega)+\varepsilon\).
\end{theorem}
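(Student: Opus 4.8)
The plan is to assemble the curve from two pieces: a near-optimal connection from $x$ to a point of $\Omega$ very close to $y$, coming from Theorem~\ref{thm:main} applied in $\Omega$, followed by a ``spiral'' that reaches $y$ from inside $\Omega$, built by concatenating infinitely many short detours obtained from Theorem~\ref{thm:main} applied in tiny subdomains near $y$. A simple-arc extraction at the very end promotes this to an injective Lipschitz curve. The reason the spiral is cheap is that $\H^1(\partial\Omega)<\infty$ forces the boundary to carry almost no length near $y$.

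To set this up, fix $\varepsilon>0$ and put $L:=\H^1(\partial\Omega)$. The set function $A\mapsto\H^1(A\cap\partial\Omega)$ is a finite measure, hence continuous from above, so $\H^1\big(\partial\Omega\cap\overline{B(y,\rho)}\big)\to\H^1(\{y\})=0$ as $\rho\downarrow 0$. I would accordingly choose radii $\rho_1>\rho_2>\cdots\downarrow 0$ and, for each $k$, the connected component $V_k$ of $\Omega\cap B(y,\rho_k)$ whose closure contains $y$ — arranged inductively so that $V_1\supseteq V_2\supseteq\cdots$ — in such a way that, using $\partial V_k\subseteq\big(\partial\Omega\cap\overline{B(y,\rho_k)}\big)\cup\partial B(y,\rho_k)$,
\[
\H^1(\partial V_k)\le\H^1\big(\partial\Omega\cap\overline{B(y,\rho_k)}\big)+2\pi\rho_k<2^{-k-2}\varepsilon\qquad\text{for every }k\ge 1 .
\]
Pick any $w_k\in V_k$; then $w_k\to y$ since $V_k\subseteq B(y,\rho_k)$, and $w_k,w_{k+1}\in V_k$ because $V_{k+1}\subseteq V_k$. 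As each $V_k$ is a bounded domain with $\H^1(\partial V_k)<\infty$, the bounded case of Theorem~\ref{thm:main}, applied in $V_k$, yields a curve $\sigma_k$ inside $V_k\subseteq\Omega$ joining $w_k$ to $w_{k+1}$ with $\ell(\sigma_k)\le\H^1(\partial V_k)+2^{-k-2}\varepsilon<2^{-k-1}\varepsilon$; likewise Theorem~\ref{thm:main} in $\Omega$ gives a curve $\sigma_0\subseteq\Omega$ from $x$ to $w_1$ with $\ell(\sigma_0)\le\H^1(E)+\varepsilon/2\le L+\varepsilon/2$.

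Next I would form the concatenation $\Gamma:=\sigma_0\ast\sigma_1\ast\sigma_2\ast\cdots$, parametrised so that $\sigma_k$ is traversed on $[1-2^{-k},1-2^{-k-1}]$ and $\Gamma(1):=y$. Then $\Gamma\colon[0,1]\to\R^2$ is continuous — for $t$ near $1$ one has $\Gamma(t)\in V_k\subseteq B(y,\rho_k)$ with $\rho_k\to 0$, so $\Gamma(t)\to y$ — and has finite length, with $\Gamma(0)=x$, $\Gamma(1)=y$, $\Gamma([0,1))\subseteq\Omega$ (each piece $\sigma_k$ lies in $\Omega$), and
\[
\ell(\Gamma)=\ell(\sigma_0)+\sum_{k\ge 1}\ell(\sigma_k)\le\Big(L+\frac{\varepsilon}{2}\Big)+\sum_{k\ge 1}2^{-k-1}\varepsilon=L+\varepsilon .
\]
Since $\Gamma$ need not be injective, I would then pass to a simple sub-arc joining $x$ to $y$: the image of any path joining two points contains an arc joining them, and the length of an injective rectifiable curve equals the $\H^1$-measure of its image, which here is a subset of $\Gamma([0,1])$ and so has measure at most $\ell(\Gamma)$. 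Reparametrising this sub-arc at constant speed produces an injective Lipschitz $\gamma\colon[0,1]\to\R^2$ with $\gamma(0)=x$, $\gamma(1)=y$ and $\ell(\gamma)\le L+\varepsilon$; and as its image is contained in $\Gamma([0,1])\subseteq\Omega\cup\{y\}$, injectivity forces $\gamma((0,1))\subseteq\Omega$, as required.

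The step I expect to be the real obstacle — and the one place where $\H^1(\partial\Omega)<\infty$ cannot be dispensed with — is the existence of the subdomains $V_k$: one must show that for all small $\rho$ at least one component of $\Omega\cap B(y,\rho)$ has $y$ in its closure, and that these components can be chosen nested. If the boundary were allowed to have infinite length near $y$, the point $y$ could be screened off from $\Omega$ by infinitely many boundary components of infinite total length, and no such nested chain of subdomains would be available. I would establish this by a compactness argument carried out at the scale of the balls $B(y,\rho)$, in the same spirit as the proof of Theorem~\ref{thm:main}; granting it, the remaining ingredients (the simple-arc lemma and the continuity of the infinite concatenation at the endpoint $t=1$) are standard.
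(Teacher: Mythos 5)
Your overall architecture is the same as the paper's: build a nested sequence of small subdomains clinging to $y$, connect consecutive points by near-optimal curves (using the sharp bounded estimate \eqref{eq:inner_dist_bdd} in place of \eqref{eq:known_inner_dist}, which is exactly the replacement the paper prescribes for this theorem), concatenate, and then pass to a simple arc. Your endgame is in fact cleaner than the paper's: extracting an arc from the image of the concatenated path and using that length of an arc equals the $\H^1$-measure of its image replaces the recursive injectivity construction of Step 2 of Theorem \ref{thm:accessible}, and your length bookkeeping, the continuity of $\Gamma$ at $t=1$, and the constant-speed reparametrization are all correct.

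However, the step you flag and then defer is a genuine gap, and it is precisely the technical core of the argument: the existence, for each small $\rho_k$, of a connected component $V_k$ of $\Omega\cap B(y,\rho_k)$ whose closure contains $y$, with $\partial V_k\subset\partial\Omega\cup\partial B(y,\rho_k)$ and with the $V_k$ nested. This is the content of Lemma \ref{lem:accessible_pts_aux}, and "a compactness argument at the scale of $B(y,\rho)$" does not settle it, because the obvious route fails: one would like to say that only finitely many components of $\Omega\cap B(y,\rho)$ can stretch from $\partial B(y,\rho/2)$ to $\partial B(y,\rho)$, since each such component has boundary of length at least comparable to $\rho$ and these boundaries lie in $\partial\Omega\cup\partial B(y,\rho)$, which has finite length; but the boundaries of distinct components need not be disjoint (think of the two sides of a slit), so their lengths cannot simply be summed. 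The paper circumvents this with the $K_{3,3}$ non-planarity argument \eqref{eq:access_1}, showing that the boundaries of any three distinct components meet in at most two points, which recovers $\H^1\big(\bigcup_i\partial V_i\big)\gtrsim\sum_i\H^1(\partial V_i)$ and hence the finiteness; only from that finiteness do you get a single component with $y$ in its closure (rather than infinitely many components accumulating at $y$ with none containing it in its closure), and the same finiteness applied inside $V_k$ gives the nestedness. Without this lemma, or an equivalent substitute, your chain $V_1\supseteq V_2\supseteq\cdots$ is not established, so the proposal is incomplete exactly where you suspected it would be; everything downstream of that point is fine.
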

The previous result can be proven by arguing as in the proof of
Theorem \ref{thm:accessible}, but replacing the estimate
\eqref{eq:known_inner_dist} with \eqref{eq:inner_dist_bdd}.
\begin{rem}
We point out that the curve \(\gamma\) in Theorem \ref{thm:accessibility_thm} can be 
chosen to be smooth in the open interval \((0,1)\), as follows from a standard 
approximation argument.
\end{rem}
The paper is organized as follows. In Section \ref{sec:aux} we recall, and prove, basic results in planar geometry; especially for planar domains whose boundary has finite length. In Section \ref{sec:Painleve} we show an improved version of the Painlev\'e length estimate for connected sets and show the sharpness of the general Painlev\'e length estimate for disconnected sets. In the final Section \ref{sec:proof} we prove our main theorem, Theorem \ref{thm:main}.
%
%
%
%
%
\section{Some auxiliary results}\label{sec:aux}
We collect in this section some standard results in planar geometry
that will be needed in the remaining part of this paper.
An open, connected subset of \(\R^2\) is referred to as a \emph{(planar) domain}.
By \emph{Jordan loop} we mean a closed simple curve \(\sigma\colon[0,1]\to\R^2\).
%
%
\begin{lemma}\label{lem:diam_vs_H1}
Let \(C\subset\R^2\) be a connected set. Then it holds that
\[
\H^1(C)\geq|x-y|\quad\text{ for every }x,y\in C.
\]
In particular, we have that \(\diam(C)\leq\H^1(C)\).
\end{lemma}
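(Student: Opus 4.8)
The plan is to reduce everything to a one-dimensional statement by an orthogonal projection. We may clearly assume $x\neq y$ and $\H^1(C)<\infty$, the remaining cases being trivial. Let $p\colon\R^2\to\R$ denote the orthogonal projection onto the line through $x$ and $y$, normalised so that $p$ is $1$-Lipschitz and $|p(x)-p(y)|=|x-y|$. Since $C$ is connected and $p$ is continuous, $p(C)$ is a connected subset of $\R$, hence an interval $I$; as $I$ contains the two points $p(x)$ and $p(y)$, its length is at least $|p(x)-p(y)|=|x-y|$, and on $\R$ the one-dimensional Hausdorff measure coincides with Lebesgue measure, so $\H^1(I)\geq|x-y|$.

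It then remains to invoke the standard fact that a $1$-Lipschitz map does not increase $\H^1$, which gives
\[
\H^1(C)\;\geq\;\H^1\bigl(p(C)\bigr)\;=\;\H^1(I)\;\geq\;|x-y|,
\]
the asserted inequality. For the ``in particular'' part, I would simply take the supremum over $x,y\in C$ of the inequality just established, obtaining $\diam(C)=\sup_{x,y\in C}|x-y|\leq\H^1(C)$.

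I do not anticipate any genuine obstacle here: the proof uses only the elementary topological fact that a connected subset of $\R$ is an interval, together with the routine covering estimate showing that an $L$-Lipschitz image scales $\H^1$ by at most a factor $L$. The only point deserving a line of care is the treatment of the degenerate situations (a single-point set, or a set of infinite length), in which the inequality holds vacuously.
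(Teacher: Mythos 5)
Your proof is correct and follows essentially the same route as the paper: both arguments push $C$ forward to $\R$ under a $1$-Lipschitz map (the paper uses the distance function $y\mapsto|y-x|$, you use the orthogonal projection onto the line through $x$ and $y$), use connectedness to see that the image contains an interval of length $|x-y|$, and conclude via the fact that Lipschitz maps do not increase $\H^1$. The difference in the choice of $1$-Lipschitz map is purely cosmetic.
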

\begin{proof} Fix \(x\in C\) and consider the function
\(f_x\colon \R^2\to \R\)
defined by \(f_x(y)\coloneqq |y-x|\) for every \(y\in \R^2\). Observe that 
the function \(f_x\) is \(1\)-Lipschitz, so that 
\(\H^1(C)\geq \mathcal L^1\big(f_x(C)\big)\). Moreover, since \(C\) is connected, 
we know that \(\big[0,|y-x|\big]=\big[f_x(x),f_x(y)\big]\subset f_x(C)\)
holds for every \(y\in C\). Consequently, we have that 
\(|y-x|\leq \mathcal L^1\big(f_x(C)\big)\) for every \(y\in C\).
Therefore we conclude that 
\[
\H^1(C)\geq \mathcal L^1\big(f_x(C)\big)\geq |x-y|\quad \text{ for every }
x,y\in C,
\]
as required. Taking the supremum over \(x,y\in C\)
we also get that \({\rm diam}(C)\leq \H^1(C)\).
\end{proof}
For a proof of the following fact we refer, e.g., to \cite[Fact 3.1]{HakobyanHerron}:
\begin{lemma}\label{lem:fact_HH}
Let \(\Omega\) be a domain in \(\R^2\). Let \(F\) be some connected
component of \(\partial\Omega\). Denote by \(B\) the connected component
of \(\Omega^c\) that contains \(F\). Then \(\partial B=F\).
\end{lemma}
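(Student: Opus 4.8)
The plan is to establish the two inclusions $F\subseteq\partial B\subseteq\partial\Omega$ and, separately, the connectedness of $\partial B$. Granting these, the conclusion is immediate: $F$ is by definition a maximal connected subset of $\partial\Omega$, while $\partial B$ is a connected subset of $\partial\Omega$ that contains $F$, so maximality forces $\partial B\subseteq F$, and hence $\partial B=F$.

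For the inclusions I would argue as follows. Being a connected component of the closed set $\Omega^c$, the set $B$ is closed, so $\partial B=B\setminus\Int B$. Since $\Int B\subseteq\Int(\Omega^c)$ and $\partial\Omega=\partial(\Omega^c)=\Omega^c\setminus\Int(\Omega^c)$, we get at once $B\cap\partial\Omega\subseteq B\setminus\Int B=\partial B$. Conversely, take $p\in\partial B\subseteq B\subseteq\Omega^c$: if some open ball $U\ni p$ were disjoint from $\Omega$, then $U\subseteq\Omega^c$, and since $U$ is connected and meets $B$ we would get $U\subseteq B$, i.e.\ $p\in\Int B$, a contradiction; hence every neighbourhood of $p$ meets $\Omega$, that is, $p\in\overline\Omega\setminus\Omega=\partial\Omega$. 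This shows $\partial B=B\cap\partial\Omega$, and since $F$ is a connected component of $\partial\Omega$ contained in $B$ we conclude $F\subseteq\partial B\subseteq\partial\Omega$.

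The heart of the matter is the connectedness of $\partial B$, and this is where the hypothesis that $\Omega$ is connected is used. First I would show that $\R^2\setminus B$ is connected. Write $\R^2\setminus B=W\sqcup U$, with $W$ the component containing the connected set $\Omega$ and $U$ the union of the remaining components. If $V$ is any nonempty component of $U$, then $V$ is open (components of the open set $\R^2\setminus B$ are open) and disjoint from $W\supseteq\Omega$, so $V\subseteq\Omega^c$; also $\partial V\subseteq\partial(\R^2\setminus B)=\partial B\subseteq B$, and $\partial V\neq\emptyset$ because $\R^2$ is connected and $V$ is a proper nonempty open set. Thus $\overline V=V\cup\partial V$ is a connected subset of $\Omega^c$ meeting $B$, whence $\overline V\subseteq B$ by maximality of $B$ — impossible, since $V\subseteq\R^2\setminus B$. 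Therefore $U=\emptyset$ and $\R^2\setminus B=W$ is connected. Finally, $\R^2=B\cup\overline{\R^2\setminus B}$ presents the plane as a union of two closed connected sets whose intersection is precisely $\partial B$, so the unicoherence of the plane forces $\partial B$ to be connected. I expect this last appeal to unicoherence to be the only step beyond routine point-set topology; alternatively, one may simply quote \cite[Fact 3.1]{HakobyanHerron}, which is exactly the assertion of the lemma.
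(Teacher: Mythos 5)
Your argument is correct. Note, however, that the paper offers no proof of this lemma at all: it simply cites \cite[Fact 3.1]{HakobyanHerron}, exactly as you observe in your closing sentence, so there is nothing in the text to compare step by step. What you have written is a self-contained substitute, and it checks out: the identity $\partial B=B\cap\partial\Omega$ (using that $B$ is closed and that a ball avoiding $\Omega$ would be absorbed into $B$ by maximality) gives $F\subseteq\partial B\subseteq\partial\Omega$; the argument that $\R^2\setminus B$ is connected is where the connectedness of $\Omega$ enters, via the observation that any component $V$ of $\R^2\setminus B$ other than the one containing $\Omega$ would have $\overline V$ a connected subset of $\Omega^c$ meeting $B$, forcing $\overline V\subseteq B$; and writing $\R^2=B\cup\overline{\R^2\setminus B}$ as a union of two closed connected sets with intersection $\partial B$, unicoherence of the plane yields the connectedness of $\partial B$, after which maximality of the component $F$ gives $\partial B=F$. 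The appeal to unicoherence is indeed the only non-elementary ingredient, and it is the standard one for statements of this type; the minor facts you use tacitly (components of an open planar set are open with boundary contained in the boundary of that open set, and $F\neq\emptyset$) are routine and worth a sentence if this were written out in full.
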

%
%
%
%
A domain \(\Omega\subset \R^2\) is said to be \emph{locally connected along
its boundary} provided for every point \(x\in\partial\Omega\) and every
radius \(r>0\) there exists \(t\in(0,r)\) such that \(\Omega\cap B_t(x)\)
is contained in one connected component of \(\Omega\cap B_r(x)\).
\medskip

The following result has been stated and proved in \cite[Corollary 3.3]{HakobyanHerron}:
\begin{proposition}\label{prop:HH2}
Let \(\Omega\subset\R^2\) be any domain such that \(\R^2\setminus\Omega\)
is connected and not a singleton. Suppose that \(\Omega\) is locally connected
along its boundary and \(\partial\Omega\) is bounded. Then \(\partial\Omega\)
is a Jordan loop.
\end{proposition}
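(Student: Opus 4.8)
The plan is to invoke the classical fact (Carathéodory's theorem, or the Torhorst theorem) that if $\Omega$ is a simply connected domain in the Riemann sphere $\hat\R^2=S^2$ whose boundary is locally connected, then $\partial\Omega$ is locally connected and the Riemann map from the disk extends continuously to the boundary. More precisely, I would first observe that since $\R^2\setminus\Omega$ is connected, nonempty, not a singleton, and closed, the domain $\Omega$ viewed inside $S^2$ is simply connected: its complement in $S^2$ is connected (it is the one-point compactification of $\R^2\setminus\Omega$ if $\Omega$ is unbounded, or $(\R^2\setminus\Omega)\cup\{\infty\}$, still connected). Hence by the Riemann mapping theorem there is a conformal homeomorphism $\varphi\colon\mathbb D\to\Omega$, where $\mathbb D$ is the open unit disk.

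The key step is to upgrade the hypothesis ``locally connected along its boundary'' to ``$\partial\Omega$ is locally connected'' in the usual topological sense, which is exactly the condition needed for Carathéodory's extension theorem. This is where the definition given just before the proposition is used: for each $x\in\partial\Omega$ and each $r>0$ there is $t\in(0,r)$ with $\Omega\cap B_t(x)$ contained in a single component of $\Omega\cap B_r(x)$. From this one deduces that prime ends of $\Omega$ correspond to single boundary points, so that $\varphi$ extends to a continuous surjection $\bar\varphi\colon\bar{\mathbb D}\to\bar\Omega$ with $\bar\varphi(\partial\mathbb D)=\partial\Omega$. Then $\partial\Omega$, being the continuous image of the circle $\partial\mathbb D$, is a locally connected continuum (a Peano continuum).

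The final step is to promote the continuous parametrization $\bar\varphi|_{\partial\mathbb D}\colon\partial\mathbb D\to\partial\Omega$ to an \emph{injective} one, i.e.\ a Jordan loop. Here I would argue by contradiction: suppose $\bar\varphi(e^{i\theta_1})=\bar\varphi(e^{i\theta_2})$ for some $\theta_1\neq\theta_2$; this identification would force the existence of a crosscut of $\mathbb D$ whose image is a closed curve in $\R^2$ separating $\Omega$ into two pieces that are both ``inside'', contradicting either the connectedness of $\R^2\setminus\Omega$ or the fact that $\partial B=F$ from Lemma \ref{lem:fact_HH} applied to the unique boundary component. Concretely, since $\R^2\setminus\Omega$ is connected and is the full boundary complement, a self-intersection of the boundary parametrization would produce a bounded complementary component of $\partial\Omega$ disjoint from $\R^2\setminus\Omega$, hence contained in $\Omega$ — impossible since that component would be separated from the rest of $\Omega$. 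Therefore $\bar\varphi|_{\partial\mathbb D}$ is injective, and a continuous injection from the circle is a homeomorphism onto its image, so $\partial\Omega$ is a Jordan loop.

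The main obstacle is the rigorous passage from the metric ``locally connected along the boundary'' condition to the applicability of Carathéodory's prime-end theory and then to injectivity of the boundary map; the cleanest route is simply to cite the standard references (Pommerenke, \emph{Boundary Behaviour of Conformal Maps}, or Milnor, \emph{Dynamics in One Complex Variable}) for the Carathéodory extension, and to handle injectivity via the separation argument above using Lemma \ref{lem:fact_HH}. Since the statement is quoted from \cite[Corollary 3.3]{HakobyanHerron}, in the paper itself one would just reproduce or cite that proof rather than redo the complex-analytic machinery.
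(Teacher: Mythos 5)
The paper offers no internal proof of this proposition at all: it is imported verbatim from \cite[Corollary 3.3]{HakobyanHerron}, so your closing remark that one would simply cite that result is exactly what the paper does, and the only thing to assess is your sketch. There the injectivity step has a genuine gap. Your separation argument (``a self-intersection of the boundary parametrization would produce a bounded complementary component of $\partial\Omega$ disjoint from $\R^2\setminus\Omega$'') uses only the connectedness of $\R^2\setminus\Omega$ and Lemma \ref{lem:fact_HH}, and nowhere the hypothesis that $\Omega$ is locally connected along its boundary; but that hypothesis is precisely what rules out non-injectivity. Take the slit disk $\Omega=\mathbb{D}\setminus[0,1)$: its complement is connected and not a singleton, $\partial\Omega$ is bounded and locally connected, the Riemann map extends continuously, and the boundary parametrization passes through every interior slit point twice; yet the only components of $\R^2\setminus\partial\Omega$ are $\Omega$ and the exterior of the closed disk, so the extra bounded complementary component your argument predicts does not exist, and $\partial\Omega$ is not a Jordan loop. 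As written, your argument would therefore prove a false statement. A correct argument must localize at a multiple point $w=\bar\varphi(a)=\bar\varphi(b)$: the crosscut from $a$ to $b$ maps to a Jordan curve $J$ with $J\setminus\{w\}\subset\Omega$, the images of the two complementary subarcs of $\partial\mathbb{D}$ approach $w$ from opposite sides of $J$, and it is the local connectedness of $\Omega$ along its boundary at $w$ that must be invoked to show these two local pieces of $\Omega$ cannot lie in one component of $\Omega\cap B_r(x)$-type neighbourhoods for small radii; some care is needed because $J$ itself accumulates at $w$ through $\Omega$.

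Two smaller points. First, the continuity step is only asserted: the claim that ``prime ends of $\Omega$ correspond to single boundary points'' is the other place where local connectedness along the boundary is used, and it needs either an argument or a precise prime-end citation (local connectedness of $\partial\Omega$ as a set is what Carath\'eodory's theorem asks for, and it is not literally the hypothesis you are given). Second, in the case where $\Omega$ is unbounded, the boundedness of $\partial\Omega$ forces $\R^2\setminus\Omega$ to be bounded, so $S^2\setminus\Omega=(\R^2\setminus\Omega)\cup\{\infty\}$ is disconnected; one must adjoin $\infty$ to $\Omega$, not to its complement, before applying the Riemann mapping theorem. Given all this, the cleanest course---and the one the paper takes---is simply to cite \cite[Corollary 3.3]{HakobyanHerron}.
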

As an immediate consequence, we can obtain the following result:
\begin{theorem}\label{thm:H1_finite_Jordan_domain}
Let \(\Omega\) be a bounded domain in \(\R^2\) with \(\H^1(\partial\Omega)<+\infty\).
Let \(U\) be a connected component of \(\R^2\setminus\overline\Omega\).
Then \(\partial U\) is a Jordan loop (of finite length). Moreover, it holds that:
\begin{itemize}
\item[\(\rm i)\)] If \(U\) is bounded, then \(\Omega\) lies in the unbounded
connected component of \(\R^2\setminus\partial U\).
\item[\(\rm ii)\)] If \(U\) is unbounded, then \(\Omega\) lies in the bounded
connected component of \(\R^2\setminus\partial U\).
\end{itemize}
\end{theorem}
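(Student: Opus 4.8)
The plan is to apply Proposition \ref{prop:HH2} with the domain taken to be $U$ itself. First I would record the routine facts. The set $U$ is open and connected, being a connected component of the open set $\R^2\setminus\overline\Omega$; since a connected component of an open set is closed in that set, $\partial U\subseteq\partial(\R^2\setminus\overline\Omega)\subseteq\partial\Omega$, so $\partial U$ is bounded (as $\Omega$ is) and $\H^1(\partial U)\le\H^1(\partial\Omega)<+\infty$; and $\R^2\setminus U$ is not a singleton, since it contains the nonempty open set $\Omega$. To see that $\R^2\setminus U$ is connected, I would write it as $\overline\Omega\cup\bigcup_j U_j$, where the $U_j$ are the connected components of $\R^2\setminus\overline\Omega$ other than $U$: each $U_j$ is a nonempty proper open subset of $\R^2$ that is closed in $\R^2\setminus\overline\Omega$, so $\emptyset\neq\partial U_j\subseteq\overline\Omega$, and hence $\overline{U_j}$ is connected and meets the connected set $\overline\Omega$; therefore $\overline\Omega\cup\bigcup_j\overline{U_j}$ is connected, and it equals $\R^2\setminus U$ because $\partial U_j\subseteq\overline\Omega$.

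The one remaining hypothesis of Proposition \ref{prop:HH2}, and the genuine obstacle, is that $U$ be locally connected along its boundary. Here I would exploit two features of $U$. First, $U$ is a \emph{regular} open set, i.e.\ $U=\mathrm{int}(\overline U)$: indeed $\overline U\cap\Omega=\emptyset$ (one has $U\cap\Omega=\emptyset$ since $U\subseteq\R^2\setminus\overline\Omega$, and $\partial U\cap\Omega=\emptyset$ by a one-line argument using that $\Omega$ is open), so every point of $\mathrm{int}(\overline U)$ has a neighbourhood missing $\Omega$ and thus cannot lie in $\partial U\subseteq\overline\Omega$; equivalently, $\R^2\setminus U$ has positive planar measure in every ball centred at a boundary point. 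Second, $\partial U$ is closed with $\H^1(\partial U)<+\infty$, so the coarea (Eilenberg) inequality applied to $y\mapsto|y-x|$ makes $\partial U\cap\partial B_s(x)$ a finite set for a.e.\ $s$, and by Lemma \ref{lem:diam_vs_H1} only finitely many components of $\partial U$ can exceed any given diameter. Given $x\in\partial U$ and $r>0$, I would pick such an $s=:t\in(0,r)$: the finite set $\partial U\cap\partial B_t(x)$ cuts the circle $\partial B_t(x)$ into finitely many open arcs, each contained wholly in $U$ or wholly in $\R^2\setminus\overline U$, and I would combine this with the regularity of $U$ and the connectedness of $\R^2\setminus U$ to conclude that $x$ lies in the closure of exactly one connected component of $U\cap B_r(x)$ --- which is exactly what is needed, since it means $U\cap B_t(x)$ is contained in one component of $U\cap B_r(x)$ for $t$ small. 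The role of the two features is that finiteness of $\H^1(\partial U)$ rules out ``comb-like'' configurations (infinitely many components of $\partial U$ of comparable size clustering at $x$), while regularity of $U$ rules out ``slit-like'' configurations ($U$ approaching a single arc of $\partial U$ from both sides); I expect this to be the hardest and most technical part of the proof.

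Once Proposition \ref{prop:HH2} applies, $\partial U$ is a Jordan loop, of finite length by the above. For the final two assertions I would invoke the Jordan curve theorem: $\R^2\setminus\partial U$ has exactly one bounded component $D_b$ and one unbounded component $D_u$. Since $U$ is clopen in $\R^2\setminus\partial U$ --- because $\overline U\cap(\R^2\setminus\partial U)=U$ --- and connected, $U$ equals $D_b$ or $D_u$, namely $D_b$ if $U$ is bounded and $D_u$ if $U$ is unbounded. On the other hand $\Omega$ is connected and disjoint from $\partial U\subseteq\partial\Omega$, so $\Omega$ lies in a single component of $\R^2\setminus\partial U$, and since $\Omega\cap U=\emptyset$ this component is the one different from $U$. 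Hence $\Omega\subseteq D_u$ if $U$ is bounded, proving i), and $\Omega\subseteq D_b$ if $U$ is unbounded, proving ii).
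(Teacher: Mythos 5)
Your overall strategy coincides with the paper's: reduce to Proposition \ref{prop:HH2} applied to $U$, with local connectedness along $\partial U$ as the only substantive hypothesis, and then settle i) and ii) via the Jordan curve theorem. Your verification of the routine hypotheses (connectedness of $\R^2\setminus U$, boundedness and finite length of $\partial U\subseteq\partial\Omega$, regularity of $U$) and your endgame for i)--ii) are correct, and in fact more explicit than the paper, which treats these points as standard.

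The gap is exactly at the step you flag as hardest, and it is not just a missing computation: the criterion you reduce to is insufficient. Knowing that $x$ lies in the closure of exactly one connected component of $U\cap B_r(x)$ does \emph{not} imply that $U\cap B_t(x)$ is contained in one component of $U\cap B_r(x)$ for small $t$: there could be infinitely many components $E_n$ of $U\cap B_r(x)$ with $\dist(x,E_n)>0$ but $\dist(x,E_n)\to 0$, none having $x$ in its closure; then every ball $B_t(x)$ meets infinitely many components and local connectedness fails, even though your criterion holds. What is really needed---and what the paper proves as claim \eqref{eq:enclosing_loop_claim}---is a finiteness statement: only finitely many components of $U\cap B_{2r}(x)$ meet $B_r(x)$. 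The paper extracts this from $\H^1(\partial\Omega)<\infty$ by showing each such component crosses every circle $\partial B_\lambda(x)$, $\lambda\in(r,2r)$, so its boundary contributes a pairwise disjoint piece $\Gamma_i\subseteq\partial U$ with $\H^1(\Gamma_i)\geq r$. Your own ingredient could in principle substitute for this (if $\partial U\cap\partial B_t(x)$ is finite, then every component of $U\cap B_r(x)$ meeting $B_t(x)$ must cross $\partial B_t(x)$ and hence contain one of the finitely many arcs of $\partial B_t(x)\cap U$, which bounds their number), but you neither state nor prove this; the mention of Lemma \ref{lem:diam_vs_H1} and diameters of components of $\partial U$ does not help, since $\partial U$ may well be connected. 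Instead you assert, without argument, that regularity of $U$ plus connectedness of $\R^2\setminus U$ yield the ``exactly one accumulating component'' statement; that assertion is itself a nontrivial claim (it is where comb-like or two-sided approach of $U$ to $x$ must be excluded) and is not delivered by your single good radius $t$. As written, the proof of local connectedness along the boundary---the heart of the theorem---is therefore missing, and the stated reduction would not suffice even if the asserted sub-claim were granted without the accompanying finiteness.
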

\begin{proof}
Consider a connected component \(U\) of \(\R^2\setminus\overline\Omega\)
and denote $V\coloneqq\R^2\setminus\overline U$.
In view of Proposition \ref{prop:HH2}, it is sufficient to show that
\(U\) is locally connected along its boundary. Fix any \(x\in\partial U\)
and \(r>0\). We claim that: 
\begin{equation}\label{eq:enclosing_loop_claim}
\text{Only finitely many connected components of }U\cap B_{2r}(x)
\text{ intersect }B_r(x).
\end{equation}
Call \(\mathcal F\) such family. Suppose by contradiction that \(\mathcal F\)
is not a finite set and call \(\{E_i\}_{i\in I}\) its elements. Notice that
for any \(i\in I\) we have that \(E_i\cap\partial B_{2r}(x)\neq\emptyset\)
(as \(U\) is connected), thus also \(E_i\cap\partial B_r(x)\neq\emptyset\)
(as \(E_i\) intersects \(B_r(x)\) by definition of \(\mathcal F\)).
In particular, one has \(E_i\cap\partial B_\lambda(x)\neq\emptyset\) for all
\(\lambda\in(r,2r)\). Furthermore, it holds that
\(V\cap\partial B_\lambda(x)\neq\emptyset\) for every \(\lambda\in(r,2r)\)
(otherwise there would be just one element in \(\mathcal F\)). This implies that
\begin{equation}\label{eq:enclosing_loop_aux}
\partial E_i\cap\partial B_\lambda(x)\neq\emptyset
\quad\text{ for every }i\in I\text{ and }\lambda\in(r,2r).
\end{equation}
Let us define \(\Gamma_i\coloneqq\partial E_i\cap B_{2r}(x)\) for every \(i\in I\).
Then \(\{\Gamma_i\}_{i\in I}\) are pairwise disjoint Borel subsets of 
\(\partial U\), see Figure \ref{fig:longbound}.
We infer from the property \eqref{eq:enclosing_loop_aux}
that \(\H^1(\Gamma_i)\geq r\) for every \(i\in I\), whence accordingly it holds
that \(\H^1(\partial\Omega)\geq\H^1(\partial U)\geq\sum_{i\in I}\H^1(\Gamma_i)=+\infty\).
This leads to a contradiction, thus proving the claim \eqref{eq:enclosing_loop_claim}.
\begin{figure}
    \centering
    \includegraphics[width=0.4\columnwidth]{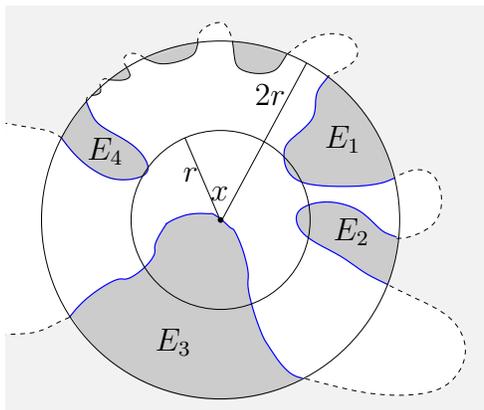}
    \caption{Proofs of Theorem \ref{thm:H1_finite_Jordan_domain} and Lemma \ref{lem:accessible_pts_aux} rely on the finiteness of the length of the boundary of $\Omega$ to deduce that there are only finitely many components intersecting both the small ball and the complement of the large ball.}
    \label{fig:longbound}
\end{figure}

Since \(\mathcal F\) has finite cardinality, we can find \(t\in(0,r)\) so
small that the element of \(\mathcal F\) containing \(x\) is the only one
that intersects \(B_t(x)\). This forces \(U\cap B_t(x)\) to be contained in
one connected component of \(U\cap B_{2r}(x)\). Therefore \(U\) is connected
along its boundary, as required. The proof of items i) and ii) follows by a
standard topological argument.
\end{proof}
\begin{lemma}\label{lem:accessible_pts_aux}
Let \(\Omega\subset\R^2\) be a bounded domain
satisfying \(\H^1(\partial\Omega)<+\infty\). Fix any \(x\in\partial\Omega\)
and \(\varepsilon>0\). Then there exists a subdomain \(\Omega'\) of \(\Omega\)
with \(\H^1(\partial\Omega')\leq\varepsilon\) such that \(x\in\partial\Omega'\)
and \(\diam(\Omega')\leq\varepsilon\). Moreover, we can further require
that \(\partial\Omega'\subset\partial\Omega\cup\partial B(x,r)\)
for some \(r>0\).
\end{lemma}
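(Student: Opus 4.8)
The plan is to find a small circle around $x$ that intersects $\partial\Omega$ in a controlled way, and then extract $\Omega'$ as a suitable connected component of $\Omega\cap B(x,r)$. First I would apply a claim analogous to \eqref{eq:enclosing_loop_claim} in the proof of Theorem \ref{thm:H1_finite_Jordan_domain}: since $\H^1(\partial\Omega)<\infty$, only finitely many connected components of $\Omega\cap B_{2\rho}(x)$ can meet $B_\rho(x)$ for any fixed $\rho>0$; indeed, each such component $E$ whose closure reaches $\partial B_{2\rho}(x)$ contributes a piece $\partial E\cap B_{2\rho}(x)$ of $\partial\Omega$ of $\H^1$-measure at least $\rho$, by the same radial-slicing argument. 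Choosing $\rho<\varepsilon/4$ small enough, we may then further shrink to a radius $t\in(0,\rho)$ so that only the component of $\Omega\cap B_{2\rho}(x)$ containing points near $x$ meets $B_t(x)$.

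Next, I would use the coarea/slicing inequality for $\H^1$ and the finiteness of $\H^1(\partial\Omega)$ to pick a good radius $r\in(0,t)$: since $\int_0^{t}\H^0\big(\partial\Omega\cap\partial B(x,s)\big)\,\d s\le\H^1\big(\partial\Omega\cap B(x,t)\big)<\infty$, for a.e.\ $s\in(0,t)$ the sphere $\partial B(x,s)$ meets $\partial\Omega$ in a finite set; moreover, by absolute continuity we can choose $r$ so that $\H^1\big(\partial\Omega\cap \overline{B(x,r)}\big)$ is as small as we like, say $<\varepsilon/2$, and so that the intersection $\partial\Omega\cap\partial B(x,r)$ is finite and $\partial B(x,r)$ is not contained in $\overline\Omega$. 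With such an $r$ fixed, I would let $\Omega'$ be the connected component of $\Omega\cap B(x,r)$ whose closure contains $x$ (this is nonempty because $x\in\partial\Omega$ and $\Omega$ is open, so arbitrarily small punctured balls around $x$ meet $\Omega$; the component is unique once $r<t$). Then $\Omega'$ is a subdomain of $\Omega$, clearly $\diam(\Omega')\le 2r\le\varepsilon$, and $x\in\partial\Omega'$.

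It remains to control $\partial\Omega'$ and to verify $\partial\Omega'\subset\partial\Omega\cup\partial B(x,r)$. A boundary point $z$ of $\Omega'$ lies in $\overline{B(x,r)}$; if $|z-x|<r$ then $z$ lies in the open set $B(x,r)$, so a neighborhood of $z$ is split between $\Omega$ and $\Omega^c$ relative to $\Omega'$, forcing $z\in\partial\Omega$; if $|z-x|=r$ then $z\in\partial B(x,r)$. Hence $\partial\Omega'\subset\big(\partial\Omega\cap\overline{B(x,r)}\big)\cup\partial B(x,r)$, so
\[
\H^1(\partial\Omega')\le\H^1\big(\partial\Omega\cap\overline{B(x,r)}\big)+\H^1\big(\partial B(x,r)\big)<\tfrac{\varepsilon}{2}+2\pi r.
\]
This is not yet $\le\varepsilon$, because the arc length $2\pi r$ is too big: we only used that $\partial\Omega'$ lies \emph{inside} the disk, not that it uses only a short arc of the bounding circle. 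The fix is to replace the full sphere by the actual arcs of $\partial B(x,r)$ that bound $\Omega'$: since $\partial\Omega\cap\partial B(x,r)$ is a finite set, $\partial B(x,r)\setminus\partial\Omega$ is a finite union of open arcs, and only those arcs that separate $\Omega'$ from the rest can appear in $\partial\Omega'$; a more careful topological analysis (or, alternatively, first intersecting with a disk on which $\partial\Omega$ meets the sphere in at most, say, two points, achievable for a density-point choice of $r$ together with the finiteness of components) shows the total length of these arcs is at most a constant multiple of $r$, which we absorb into $\varepsilon$ by shrinking $r$ at the outset. I expect this last point — arguing that only a short portion of the circle $\partial B(x,r)$ ends up in $\partial\Omega'$, so that $\H^1(\partial\Omega')$ really is $\le\varepsilon$ — to be the main obstacle, and the cleanest route is to choose $r$ so small that $\H^1\big(\partial\Omega\cap\overline{B(x,r)}\big)+2\pi r<\varepsilon$ outright, which is legitimate since both terms tend to $0$ as $r\downarrow 0$, making the crude bound above already sufficient.
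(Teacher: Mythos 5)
The routine parts of your proposal coincide with what the paper does: choosing $r<\varepsilon/(4\pi)$ with $\H^1\big(\partial\Omega\cap B(x,r)\big)\leq\varepsilon/2$, the inclusion $\partial\Omega'\subset\partial\Omega\cup\partial B(x,r)$, and the final bound $\varepsilon/2+2\pi r\leq\varepsilon$ are exactly the paper's opening and closing steps, and your last observation is right that the $2\pi r$ term is no obstacle at all. The genuine difficulty, however, is the step you treat as routine: producing a single connected component of $\Omega\cap B(x,r)$ whose closure contains $x$. Here your argument has a gap in two places. First, your finiteness claim (``only finitely many components of $\Omega\cap B_{2\rho}(x)$ meet $B_\rho(x)$, since each contributes a piece of $\partial\Omega$ of measure at least $\rho$'') tacitly sums the measures of these boundary pieces as if they were pairwise disjoint, as in the proof of \eqref{eq:enclosing_loop_claim}. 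For components of $\Omega\cap B$ this disjointness is false: two distinct components can share a boundary arc of positive length (a slit or comb domain, where a segment of $\partial\Omega$ is the common boundary of the strips on both of its sides). This is precisely why the paper's proof contains Step 2: the $K_{3,3}$ non-planarity argument \eqref{eq:access_1} shows that three distinct components share at most two boundary points, giving multiplicity at most $2$ at $\H^1$-a.e.\ point, and only then can one sum up and conclude finiteness \eqref{eq:access_2}. (One could instead count the points of $\partial\Omega$ on each circle $\partial B(x,\lambda)$ that separate arcs of distinct components and use the coarea inequality you already invoke; but some argument of this kind is required, and you give none.)

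Second, and independently, your scales do not match. You prove finiteness for components of the larger set $\Omega\cap B_{2\rho}(x)$, then pass to a much smaller radius $r<t<\rho$ and take ``the component of $\Omega\cap B(x,r)$ whose closure contains $x$''. The existence of such a component does not follow from what precedes: a single component $E$ of $\Omega\cap B_{2\rho}(x)$ with $x\in\overline E$ may shatter, inside $B(x,r)$, into infinitely many components accumulating at $x$, none of which individually has $x$ in its closure; excluding this requires running the finiteness-plus-multiplicity argument at the scale $r$ itself. This is how the paper proceeds: it fixes one radius $r$, takes the family $\mathcal F$ of components of $\Omega\cap B(x,r)$ meeting $\partial B(x,r/2)$, proves $\mathcal F$ is finite, and then proves in \eqref{eq:access_3} --- using an auxiliary point $y\in\Omega\setminus\overline B(x,r)$ and the connectedness of $\Omega$ --- that $\Omega\cap B(x,r/2)\subset\bigcup_{V\in\mathcal F}V$, so that by pigeonhole $x$ lies on the boundary of one member of $\mathcal F$. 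Nothing in your proposal plays the role of this covering step; also, your parenthetical claim that the component is unique is false in general (at the tip of a slit two, or more, components have $x$ on their boundary), though only existence is needed. So the proposal identifies the wrong step as the main obstacle and leaves a genuine gap exactly at the heart of the lemma.
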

\begin{proof} {\color{blue}\textsc{Step 1.}}
Given that \(\H^1|_{\partial\Omega}\) is upper continuous,
we can choose \(r\in\big(0,\varepsilon/(4\,\pi)\big)\) such that
\(\H^1\big(\partial\Omega\cap B(x,r)\big)\leq\varepsilon/2\).
If \(\Omega\subset\overline B(x,r)\), then the set \(\Omega'\coloneqq\Omega\)
does the job, thus let us assume that there exists a point
\(y\in\Omega\setminus\overline B(x,r)\). Let us denote by \(\mathcal F\) the
family of all connected components of \(\Omega\cap B(x,r)\) that
intersect \(\partial B(x,r/2)\). Given any \(V\in\mathcal F\), it holds:
\begin{equation}\label{eq:access_0}\begin{split}
&\text{There is a continuous curve }\alpha\colon[0,1]\to\R^2\text{ such that}\\
&\alpha|_{(0,1]}\subset V\text{, }\alpha_0\in\partial B(x,r)\cap\Omega
\text{ and }\alpha_1\in\partial B(x,r/2).
\end{split}\end{equation}
Indeed, we can find a continuous curve \(\alpha'\colon[0,1]\to\Omega\)
joining \(y\) to a point of \(\partial B(x,r/2)\cap V\). Calling
\(t_0\coloneqq\max\big\{t\in[0,1]\,\big|\,\alpha'_t\in\partial B(x,r)\big\}<1\),
we see that the curve \(\alpha\) obtained by restricting \(\alpha'\)
to \([t_0,1]\) fulfills the requirements.\\
{\color{blue}\textsc{Step 2.}} Moreover, we claim that:
\begin{equation}\label{eq:access_1}
\sharp(\partial V_1\cap\partial V_2\cap\partial V_3)\leq 2
\quad\text{ if }V_1,V_2,V_3\in\mathcal F\text{ are distinct.}
\end{equation}
Call \(\sigma_1,\sigma_2,\sigma_3\) those Jordan loops such that
\(\sigma_i\subset\partial V_i\) and \(V_i\) is contained in the bounded
connected component \(U_i\) of \(\R^2\setminus\sigma_i\) for \(i=1,2,3\)
(cf.\ Theorem \ref{thm:H1_finite_Jordan_domain}).
It follows from \eqref{eq:access_0} that the sets \(U_1,U_2,U_3\) are
pairwise disjoint. Suppose by contradiction that there exist at least
three distinct points \(z_1,z_2,z_3\) in
\(\partial V_1\cap\partial V_2\cap\partial V_3\). In particular, such points
are forced to belong to \(\sigma_1\cap\sigma_2\cap\sigma_3\). Fix some other
points \(a_j\in U_j\) for \(j=1,2,3\). We can build continuous curves
\(\gamma^{ij}\colon[0,1]\to\R^2\) for \(i,j=1,2,3\) such that the following
properties hold:
\begin{itemize}
\item[\(\rm i)\)] \(\gamma^{ij}\) joins \(z_i\) to \(a_j\) for all \(i,j=1,2,3\).
\item[\(\rm ii)\)] \(\gamma^{ij}|_{(0,1)}\subset U_j\) for all \(i,j=1,2,3\).
\item[\(\rm iii)\)] \(\gamma^{1j}|_{(0,1)},\gamma^{2j}|_{(0,1)},\gamma^{3j}|_{(0,1)}\)
are pairwise disjoint for all \(j=1,2,3\).
\end{itemize}
This would imply that the complete 3-by-3 bipartite graph \(K_{3,3}\) is planar,
thus leading to a contradiction (cf.\ \cite[Chapter I, Theorem 17]{Bollobas}).
Therefore, the claim \eqref{eq:access_1} is proven.\\
{\color{blue}\textsc{Step 3.}} As a consequence, we can show that:
\begin{equation}\label{eq:access_2}
\mathcal F\text{ is a finite family.}
\end{equation}
In order to prove it, we argue by contradiction: suppose \(\mathcal F\)
is infinite, say \(\mathcal F=(V_i)_{i\in\N}\). Thanks to \eqref{eq:access_0}
we know that \(V_i\cap\partial B(x,\lambda)\neq\emptyset\) for any \(i\in\N\)
and \(\lambda\in(r/2,r)\). Since \(\mathcal F\) contains more than one
element, we infer that also \(\partial V_i\cap\partial B(x,\lambda)\neq\emptyset\)
holds for any \(i\in\N\) and \(\lambda\in(r/2,r)\). In particular, we have
that \(\H^1(\partial V_i)\geq r/2\) for every \(i\in\N\). On the other hand,
\eqref{eq:access_1} grants that 
\(\H^1\big(\bigcup_{i\in\N}\partial V_i\big)\geq 2\sum_{i\in\N}\H^1(\partial V_i)\).
This implies that \(\bigcup_{i\in\N}\partial V_i\) has infinite \(\H^1\)-measure,
which is in contradiction with the fact that
\(\bigcup_{i\in\N}\partial V_i\subset\partial\Omega\cup\partial B(x,r)\).
Accordingly, property \eqref{eq:access_2} is verified.\\
{\color{blue}\textsc{Step 4.}} We also claim that:
\begin{equation}\label{eq:access_3}
\Omega\cap B(x,r/2)\subset U\coloneqq\bigcup_{V\in\mathcal F}V.
\end{equation}
Indeed, fix any \(z\in B(x,r/2)\cap\Omega\). Choose a continuous curve
\(\alpha\colon[0,1]\to\Omega\) such that \(\alpha_0=z\) and \(\alpha_1=y\).
Call \(t_0\coloneqq\min\big\{t\in[0,1]\,\big|\,\alpha_t\in\partial B(x,r)\big\}>0\).
Then \(\alpha|_{[0,t_0)}\) is a connected subset of \(\Omega\cap B_r(x)\) that
intersects \(\partial B(x,r/2)\), whence it is contained in some element of
\(\mathcal F\). This shows that \(z\in\bigcup_{V\in\mathcal F}V\), which
yields the claim \eqref{eq:access_3}.\\
{\color{blue}\textsc{Step 5.}} We can finally conclude the proof by combining
\eqref{eq:access_2} with \eqref{eq:access_3}: the latter ensures that
\(x\in\partial U\), thus the former implies that \(x\in\partial\Omega'\)
for some element \(\Omega'\in\mathcal F\). Given that we have
\(\Omega'\subset B(x,r)\subset B(x,\varepsilon/2)\) (so that
\(\diam(\Omega')\leq\varepsilon\)) and
\[
\H^1(\partial\Omega')\leq\H^1\big(\partial\Omega\cap B(x,r)\big)
+\H^1\big(\partial B(x,r)\big)\leq\frac{\varepsilon}{2}+2\,\pi\,r\leq\varepsilon,
\]
the statement is achieved.
\end{proof}
\begin{theorem}[Accessible points]\label{thm:accessible}
Let \(\Omega\subset\R^2\) be a bounded domain
with \(\H^1(\partial\Omega)<\infty\). Let \(x\in\Omega\)
and \(y\in\partial\Omega\) be given. Then for every \(\varepsilon>0\)
there exists an injective Lipschitz curve \(\gamma\colon[0,1]\to\R^2\)
joining \(x\) to \(y\) such that \(\gamma|_{(0,1)}\subset\Omega\)
and \(\ell(\gamma)\leq|x-y|+\frac{\pi}{2}\,\H^1(\partial\Omega)+\varepsilon\).
\end{theorem}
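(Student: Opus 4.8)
The plan is to build the curve in two stages: one curve carrying $x$ most of the way to $y$ while remaining inside $\Omega$, whose length is controlled by the already-known estimate \eqref{eq:known_inner_dist}, followed by a summable cascade of very short curves closing the remaining gap, obtained by iterating Lemma \ref{lem:accessible_pts_aux} around $y$; finally one extracts a simple arc and reparametrises it.

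Concretely, I would fix an auxiliary parameter $\varepsilon'>0$ (to be chosen at the end as a fixed fraction of $\varepsilon$), set $\Omega_0\coloneqq\Omega$, and define inductively, for $n\geq1$, the subdomain $\Omega_n\subseteq\Omega_{n-1}$ produced by applying Lemma \ref{lem:accessible_pts_aux} to the bounded domain $\Omega_{n-1}$ (which has $\H^1(\partial\Omega_{n-1})<\infty$ and $y\in\partial\Omega_{n-1}$, so the lemma applies) at the point $y$ with parameter $\eta_n\coloneqq2^{-n}\varepsilon'$; thus $y\in\partial\Omega_n$, $\diam(\Omega_n)\leq\eta_n$, and $\H^1(\partial\Omega_n)\leq\eta_n$. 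Picking any $y_n\in\Omega_n$, the inclusion $y\in\partial\Omega_n$ gives $|y_n-y|\leq\diam(\overline{\Omega_n})\leq\eta_n\to0$, hence $y_n\to y$.

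Next I would apply \eqref{eq:known_inner_dist} in each of these domains. In $\Omega_0=\Omega$, with $x,y_1\in\Omega$ and $|x-y_1|\leq|x-y|+\eta_1$, it provides a curve $\gamma_0$ in $\Omega$ from $x$ to $y_1$ with $\ell(\gamma_0)\leq|x-y|+\tfrac{\pi}{2}\H^1(\partial\Omega)+2\eta_1$; in $\Omega_n$, with $y_n,y_{n+1}\in\Omega_n$ and $|y_n-y_{n+1}|\leq\diam(\Omega_n)\leq\eta_n$, it provides, for each $n\geq1$, a curve $\gamma_n$ in $\Omega_n$ from $y_n$ to $y_{n+1}$ with $\ell(\gamma_n)\leq(2+\tfrac{\pi}{2})\eta_n$. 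Concatenating these (running $\gamma_0$ on $[0,\tfrac12]$ and $\gamma_n$ on $[1-2^{-n},1-2^{-n-1}]$, and setting the terminal value equal to $y$) defines a map $\gamma\colon[0,1]\to\R^2$, continuous on $[0,1)$ as a locally finite concatenation and continuous at $1$ because for $t\geq1-2^{-n}$ the point $\gamma(t)$ lies in $\overline{\Omega_n}$ (using $\Omega_m\subseteq\Omega_n$ for $m\geq n$ and $y\in\partial\Omega_n$), a set of diameter at most $\eta_n\to0$. It is rectifiable, each $\gamma_n$ lies in $\Omega_n\subseteq\Omega$ and $\gamma(0)=x\in\Omega$, so $\gamma([0,1))\subseteq\Omega$ with $\gamma(1)=y\in\partial\Omega$, and since $2\eta_1+(2+\tfrac{\pi}{2})\sum_{n\geq1}\eta_n=(3+\tfrac{\pi}{2})\varepsilon'$,
\[
\ell(\gamma)=\ell(\gamma_0)+\sum_{n\geq1}\ell(\gamma_n)\leq|x-y|+\tfrac{\pi}{2}\,\H^1(\partial\Omega)+\Big(3+\tfrac{\pi}{2}\Big)\varepsilon'.
\]
It remains to pass to a simple arc: by the standard fact that a rectifiable curve contains an injective arc with the same endpoints, of no greater length and with image contained in that of the curve, I would extract such a $\widetilde\gamma$ from $x$ to $y$, reparametrise it by rescaled arclength to make it Lipschitz, observe that injectivity together with $\widetilde\gamma(1)=y$ forces $\widetilde\gamma|_{(0,1)}\subseteq\Omega$, and finally choose $\varepsilon'\coloneqq\varepsilon/(3+\tfrac{\pi}{2})$ so that $\ell(\widetilde\gamma)\leq\ell(\gamma)\leq|x-y|+\tfrac{\pi}{2}\H^1(\partial\Omega)+\varepsilon$.

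The part I expect to be the main obstacle is the bookkeeping for the infinite concatenation: checking that it assembles into a genuine continuous rectifiable curve (continuity at the terminal point $y$ being the subtle step) and that every constituent curve stays strictly inside $\Omega$, so that passing to a simple sub-arc yields a curve meeting $\partial\Omega$ only at $y$. The geometric decay $\diam(\Omega_n),\H^1(\partial\Omega_n)\leq2^{-n}\varepsilon'$ supplied by Lemma \ref{lem:accessible_pts_aux} is exactly what makes the length series converge with the claimed bound; note also that the very same argument with \eqref{eq:known_inner_dist} replaced by \eqref{eq:inner_dist_bdd} would yield Theorem \ref{thm:accessibility_thm}.
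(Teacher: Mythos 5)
Your proposal is correct, and its first half is essentially the paper's own Step 1: both arguments iterate Lemma \ref{lem:accessible_pts_aux} at $y$ with geometrically decaying parameters, join points of consecutive subdomains by curves furnished by \eqref{eq:known_inner_dist}, sum the lengths, and get continuity at the terminal time from $\diam(\Omega_n)\to 0$. Where you genuinely diverge is the injectivity step. The paper never passes through a non-injective curve and then extracts an arc; instead it builds by hand a sequence of injective constant-speed curves $\gamma^n$, splicing each new piece $\alpha^n$ onto the previous curve at its last intersection point, takes an Ascoli--Arzel\`a limit, and then proves the limit is injective on $[0,1)$ -- this is also why its construction records the extra separation property $\dist(\partial\Omega_n\cap\Omega,\partial\Omega_{n+1}\cap\Omega)>0$, which your route does not need. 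You instead invoke the classical fact that the image of a rectifiable curve is a continuum of finite $\H^1$-measure and hence contains a simple arc between the two endpoints of length at most $\H^1\big(\gamma([0,1])\big)\leq\ell(\gamma)$. Granting that fact, the rest of your reasoning is exactly right: the arc lies in $\gamma([0,1])\subset\Omega\cup\{y\}$, injectivity together with $\widetilde\gamma(1)=y$ forces $\widetilde\gamma|_{(0,1)}\subset\Omega$, and constant-speed reparametrisation gives the Lipschitz property with the same length bound. The fact you rely on is indeed classical (it belongs to the standard theory of continua of finite length; see for instance \cite{AlbOtt17}, which the paper already cites for Lemma \ref{lem:touch_boundary}), but it should be cited explicitly rather than labelled ``standard''. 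The trade-off between the two proofs: the paper's Step 2 is self-contained at the cost of a page of careful bookkeeping, while your version is shorter and cleaner but outsources the delicate point to an external theorem; as you note, both versions upgrade verbatim to Theorem \ref{thm:accessibility_thm} once \eqref{eq:known_inner_dist} is replaced by \eqref{eq:inner_dist_bdd}.
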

\begin{proof} {\color{blue}\textsc{Step 1.}}
Fix \(\varepsilon\in(0,1)\). Call \(x_0\coloneqq x\) and \(\Omega_0\coloneqq\Omega\).
By applying Lemma \ref{lem:accessible_pts_aux} in a recursive way, we
build a decreasing sequence \((\Omega_n)_{n\geq 1}\) of subdomains of
\(\Omega\setminus\{x_0\}\) satisfying the following properties:
\begin{itemize}
\item[\(\rm i)\)] \(\H^1(\partial\Omega_n)<\varepsilon/(2^{n+1}\,\pi)\) for all \(n\geq 1\).
\item[\(\rm ii)\)] \(y\in\partial\Omega_n\) for all \(n\geq 1\).
\item[\(\rm iii)\)] \(\diam(\Omega_n)\leq\varepsilon/2^{n+2}\) for all \(n\geq 1\).
\item[\(\rm iv)\)] \(\dist(\partial\Omega_n\cap\Omega,\partial\Omega_{n+1}\cap\Omega)>0\)
for all \(n\geq 0\).
\end{itemize}
See Figure \ref{fig:access} for an illustration of the sequence of subdomains.
\begin{figure}
    \centering
    \includegraphics[width=0.5\columnwidth]{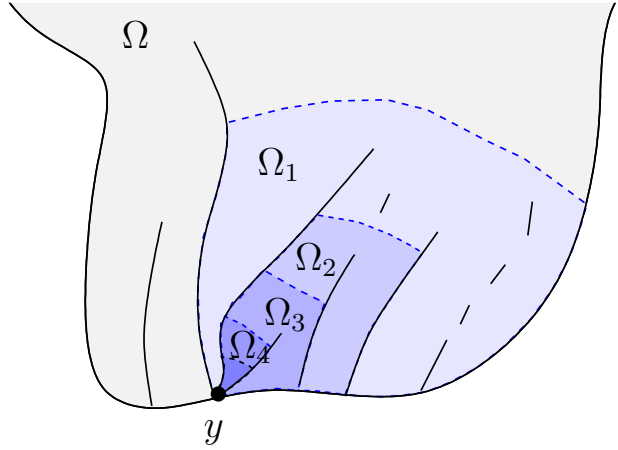}
    \caption{In the proof of Theorem \ref{thm:accessible} we use Lemma  \ref{lem:accessible_pts_aux} to find a nested sequence of subdomains. The next step is to then connect points in subsequent subdomains by using the non-optimal inner distance estimate \eqref{eq:known_inner_dist}.}
    \label{fig:access}
\end{figure}
For brevity, let us set \(I_n\coloneqq[1-2^{-n},1-2^{-n-1}]\) for all \(n\in\N\).
We claim that we can build a sequence \((x_n)_{n\geq 1}\subset\Omega\) such that
\(x_n\in\Omega_n\setminus\overline\Omega_{n+1}\) for all \(n\geq 1\), and
a sequence of injective Lipschitz curves \((\alpha^n)_{n\geq 0}\) such that
each \(\alpha^n\colon I_n\to\Omega_n\setminus\overline\Omega_{n+2}\) joins
\(x_n\) to \(x_{n+1}\) and satisfies
\begin{equation}\label{eq:length_alpha_n}\begin{split}
\ell(\alpha^0)&\leq|x-y|+\frac{\pi}{2}\,\H^1(\partial\Omega)+\frac{\varepsilon}{2},\\
\ell(\alpha^n)&\leq\frac{\varepsilon}{2^{n+1}}\quad\text{ for every }n\geq 1.
\end{split}\end{equation}
First, fix any sequence of points \((x'_n)_{n\geq 1}\subset\Omega\) that satisfies
\(x'_n\in\Omega_n\) for all \(n\geq 1\). 
By using \eqref{eq:known_inner_dist}, we can find an injective curve
\(\tilde\alpha^0\colon[0,1]\to\Omega_0\) joining \(x_0\) to \(x'_1\) such that
\(\ell(\tilde\alpha^0)<|x_0-x'_1|+\frac{\pi}{2}\,\H^1(\partial\Omega)+\varepsilon/4\).
Given that \(|x_0-x'_1|\leq|x-y|+|y-x'_1|\leq|x-y|+\varepsilon/4\) is verified
by items ii) and iii) above, we see that
\(\ell(\tilde\alpha^0)\leq|x-y|+\frac{\pi}{2}\,\H^1(\partial\Omega)+\frac{\varepsilon}{2}\).
Item iv) tells us that there exists \(t_1\in(0,1]\) such that
\(\tilde\alpha^0|_{[0,t_1]}\) lies in \(\Omega_0\setminus\overline\Omega_2\)
and \(x_1\coloneqq\tilde\alpha^0_{t_1}\in\Omega_1\). Therefore, the injective
Lipschitz curve \(\alpha^0\colon I_0\to\Omega_0\) obtained by reparametrizing
\(\tilde\alpha^0|_{[0,t_1]}\) satisfies the first line of \eqref{eq:length_alpha_n}.
Now suppose to have already defined \(x_0,\ldots,x_n\)
and \(\alpha^0,\ldots,\alpha^{n-1}\) with the required properties. 
We can use again property \eqref{eq:known_inner_dist}
and item i) to find an injective curve \(\tilde\alpha^n\colon[0,1]\to\Omega_n\)
joining \(x_n\) to \(x'_{n+1}\) such that
\(\ell(\tilde\alpha^n)<|x_n-x'_{n+1}|+\varepsilon/2^{n+2}\).
Since the points \(x_n,x'_{n+1}\) are in \(\Omega_n\), we infer from item iii) that
\(|x_n-x'_{n+1}|\leq\varepsilon/2^{n+2}\) and accordingly we have
\(\ell(\tilde\alpha^n)\leq\varepsilon/2^{n+1}\). We can choose
\(t_{n+1}\in(0,1]\) so that \(\tilde\alpha^n|_{[0,t_{n+1}]}\) lies in
\(\Omega_n\setminus\overline\Omega_{n+2}\) and
\(x_{n+1}\coloneqq\tilde\alpha^n_{t_{n+1}}\in\Omega_{n+1}\).
Therefore, the injective Lipschitz curve \(\alpha^n\colon I_n\to\Omega_n\)
obtained by reparametrizing \(\tilde\alpha^n|_{[0,t_{n+1}]}\) satisfies the
second line of \eqref{eq:length_alpha_n}. This proves our claim.

Consider the unique continuous curve \(\alpha\colon[0,1)\to\Omega\)
satisfying \(\alpha|_{I_n}\coloneqq\alpha^n\) for all \(n\in\N\).
Items ii) and iii) grant that \(|\alpha_t-y|\leq 2^{-n}\) holds for all \(n\in\N\)
and \(t\in\bigcup_{k\geq n}I_k\). This ensures that \(\lim_{t\nearrow 1}\alpha_t=y\)
whence \(\alpha\) can be extended to a continuous curve \(\alpha\colon[0,1]\to\R^2\)
joining \(x\) to \(y\). By using \eqref{eq:length_alpha_n} we also deduce that
\begin{equation}\label{eq:length_alpha}
\ell(\alpha)\leq|x-y|+\frac{\pi}{2}\,\H^1(\partial\Omega)+\frac{\varepsilon}{2}+
\sum_{n=1}^\infty\frac{\varepsilon}{2^{n+1}}
=|x-y|+\frac{\pi}{2}\,\H^1(\partial\Omega)+\varepsilon.
\end{equation}
{\color{blue}\textsc{Step 2.}} Observe that the curve \(\alpha\) might not be
injective. We thus proceed as follows: we recursively build a sequence
\((\gamma^n)_{n\geq 1}\) of curves defined on \([0,1]\) such that
\begin{itemize}
\item[\(\rm a)\)] \(\gamma^n\) is a constant-speed, \(\ell(\alpha)\)-Lipschitz
and injective curve for all \(n\geq 1\),
\item[\(\rm b)\)] \(\gamma^n\) joins \(x\) to \(x_n\) for all \(n\geq 1\),
\item[\(\rm c)\)] the image of \(\gamma^n\) lies in \(\alpha^0\cup\ldots\cup\alpha^{n-1}\)
for all \(n\geq 1\),
\item[\(\rm d)\)] calling
\(s_n\coloneqq\min\big\{t\in[0,1]\,\big|\,\gamma^n_t\in\partial\Omega_n\big\}\)
it holds \(\gamma^n|_{[0,s_n]}\subset\gamma^{n+1}\) for all \(n\geq 1\).
\end{itemize}
First, we take as \(\gamma^1\colon[0,1]\to\Omega\) the constant-speed
reparametrization of \(\alpha^0\). Now suppose to have already defined
\(\gamma^n\) for some \(n\geq 1\). Let us define
\(t'\coloneqq\max\{t\in I_n\,:\,\alpha^n_t\in\gamma^n\}\) and choose that
\(t''\in[0,1]\) for which \(\gamma^n_{t''}=\alpha^n_{t'}\). Therefore, we
call \(\gamma^{n+1}\colon[0,1]\to\Omega\) the constant-speed reparametrization
of the concatenation between \(\gamma^n|_{[0,t'']}\) and
\(\alpha^n|_{[t',+\infty)\cap I_n}\). It follows from the very construction
that \(\gamma^{n+1}\) satisfies items a), b), c) and d), as required.

The Ascoli-Arzel\`{a} theorem grants that (possibly passing to a not relabeled
subsequence) the curves \(\gamma^n\) uniformly converge to some limit curve
\(\gamma\colon[0,1]\to\R^2\) with \(\gamma_0=x\). By using item a) and the lower
semicontinuity of the length functional, we deduce that \(\ell(\gamma)\leq\ell(\alpha)\).
By item b) we know that \(\gamma_1=\lim_n\gamma^n_1=\lim_n x_n=y\).
Given \(n\geq 1\), we set \(S_n\coloneqq\gamma^n\big([0,s_n]\big)\)
and \(\lambda_n\coloneqq\H^1(S_n)\). Item d) ensures that \(S_n\subset\gamma^k\)
for all \(k\geq n\), thus \(\ell(\gamma^k)\geq\lambda_n\). Thanks to item c)
we also see that \(\gamma^k\subset\gamma\cup\alpha^{k-1}\cup\alpha^k\), so that
\[
\ell(\gamma^k)\leq\ell(\gamma)+\ell(\alpha^{k-1})+\ell(\alpha^k)
\leq\ell(\gamma)+\frac{\varepsilon}{2^k}+\frac{\varepsilon}{2^{k+1}}
\leq\ell(\gamma)+\frac{1}{2^{n-1}}\eqqcolon q_n.
\]
Given that \(\gamma^k\big(\big[0,\lambda_n/\ell(\gamma^k)\big]\big)\subset S_n\)
and \(\lambda_n/\ell(\gamma^k)\geq\lambda_n/q_n\), we have
\(\gamma^k\big([0,\lambda_n/q_n]\big)\subset S_n\). Observe also that
\begin{equation}\label{eq:estimate_d_S_n}
d_{S_n}(\gamma^k_t,\gamma^k_s)=\ell(\gamma^k)\,|t-s|\geq\lambda_n\,|t-s|
\quad\text{ for every }k\geq n\text{ and }t,s\in[0,\lambda_n/q_n].
\end{equation}
Since \(\gamma^n|_{[0,s_n]}\colon[0,s_n]\to S_n\) is a homeomorphism,
we conclude from \eqref{eq:estimate_d_S_n} by letting \(k\to\infty\)
that \(d_{S_n}(\gamma_t,\gamma_s)\geq\lambda_n\,|t-s|\) for every
\(t,s\in[0,\lambda_n/q_n]\). In particular, one has that
\begin{equation}\label{eq:gamma_inj}
\gamma\text{ is injective on }[0,\lambda_n/q_n]\text{ for every }n\geq 1.
\end{equation}
Notice that \(\alpha^0\cup\ldots\cup\alpha^{n-2}\subset S_n\) for all \(n\geq 2\)
by construction, whence \(\gamma^n\subset S_n\cup\alpha^{n-1}\) by item c) and
accordingly \(\ell(\gamma^n)\leq\lambda_n+\ell(\alpha^{n-1})\). This implies that
\[
1\geq\varliminf_{n\to\infty}\frac{\lambda_n}{q_n}=
\frac{\varliminf\nolimits_n\lambda_n}{\lim\nolimits_n q_n}\geq
\frac{\varliminf\nolimits_n\ell(\gamma^n)-\lim\nolimits_n\ell(\alpha^{n-1})}{\ell(\gamma)}
\geq 1.
\]
Therefore, we deduce that \([0,1)=\bigcup_{n\geq 1}[0,\lambda_n/q_n]\), so that
\eqref{eq:gamma_inj} grants that \(\gamma\) is injective on \([0,1)\).
Since \(\gamma\big([0,\lambda_n/q_n]\big)\subset S_n\subset\alpha\setminus\{y\}\)
for all \(n\geq 1\), we see that
\(\gamma|_{[0,1)}\subset\alpha\setminus\{y\}\subset\Omega\). Finally, the curve
\(\gamma\) is \(\ell(\alpha)\)-Lipschitz and
\(\ell(\alpha)\leq|x-y|+\frac{\pi}{2}\,\H^1(\partial\Omega)+\varepsilon\)
by \eqref{eq:length_alpha}, whence
\(\ell(\gamma)\leq|x-y|+\frac{\pi}{2}\,\H^1(\partial\Omega)+\varepsilon\) as well.
This completes the proof of the statement.
\end{proof}
\begin{lemma}\label{lem:touch_boundary}
Let \(K\subset\R^2\) be a compact, connected set. Let \(\Omega\subset\R^2\) be
an open set such that \(K\setminus\bar\Omega\neq\emptyset\). Then for any
connected component \(E\) of \(K\cap\bar\Omega\) it holds that
\(E\cap\partial\Omega\neq\emptyset\).
\end{lemma}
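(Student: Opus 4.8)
The plan is to argue by contradiction using the connectedness of $K$ together with a standard topological separation fact. Suppose $E$ is a connected component of $K\cap\bar\Omega$ with $E\cap\partial\Omega=\emptyset$. Since $E\subset\bar\Omega$ and $E$ avoids $\partial\Omega$, we actually have $E\subset\Omega$. The set $E$ is closed in $K\cap\bar\Omega$ (connected components of a topological space are closed in it), and since $K$ is compact and $\bar\Omega$ is closed, $K\cap\bar\Omega$ is compact; hence $E$ is compact. Therefore $E$ is a compact subset of the open set $\Omega$, so $\dist(E,\partial\Omega)>0$ and we may find an open set $W$ with $E\subset W\subset\bar W\subset\Omega$.

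Next I would invoke the classical fact (a consequence of the boundary bumping / ``$\theta$'' lemma for compact connected spaces, see e.g.\ the theory of continua) that if $E$ is a connected component of a compact Hausdorff space $Z:=K\cap\bar\Omega$, then $E$ equals the intersection of all relatively clopen subsets of $Z$ containing it; equivalently, for any open $W$ in $Z$ with $E\subset W$ there is a relatively clopen set $A$ with $E\subset A\subset W$. Applying this with the trace on $Z$ of the set $W$ above, we obtain a set $A\subset Z$ which is both open and closed in $Z$, with $E\subset A\subset W\subset\Omega$. Since $Z$ is closed in $\R^2$ and $A$ is closed in $Z$, the set $A$ is closed in $\R^2$; since $A\subset\Omega$ and $A$ is open in $Z$, we can write $A=U\cap Z$ for some open $U\subset\R^2$, and after intersecting $U$ with $\Omega$ (which contains $A$) we may assume $U\subset\Omega$ as well, so $A=U\cap K$ is open in $K$.

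Thus $A$ is a nonempty subset of $K$ that is both open and closed in $K$. Because $K$ is connected, this forces $A=K$. But then $K=A\subset\Omega\subset\bar\Omega$, contradicting the hypothesis $K\setminus\bar\Omega\neq\emptyset$. This contradiction shows that every connected component $E$ of $K\cap\bar\Omega$ must satisfy $E\cap\partial\Omega\neq\emptyset$, as claimed.

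The main obstacle is the middle step: justifying that a connected component $E$ of the compact set $Z=K\cap\bar\Omega$ can be separated from the rest of $Z$ by a relatively clopen set contained in any prescribed neighborhood of $E$. This is where compactness is essential (it fails for general spaces), and it rests on the standard result that in a compact Hausdorff space the connected component of a point coincides with its quasi-component. I would cite this rather than reprove it, and the rest of the argument is elementary point-set topology as sketched above.
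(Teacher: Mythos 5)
Your argument is correct. The only nontrivial ingredient is the one you isolate: in the compact Hausdorff space $Z:=K\cap\bar\Omega$, the component $E$ coincides with its quasi-component, so for the open set $W$ with $E\subset W\subset\bar W\subset\Omega$ (available because $E\cap\partial\Omega=\emptyset$ forces the compact set $E$ into the open set $\Omega$) a finite intersection of clopen sets gives a relatively clopen $A$ with $E\subset A\subset W$; this is the classical \v{S}ura-Bura/boundary-bumping fact, and citing it is legitimate. Your bookkeeping that $A$ is then clopen in $K$ itself is also sound: closedness is immediate since $A$ is closed in $Z$ and $Z$ is closed in $K$, and openness follows exactly as you say, because $A\subset\Omega$ lets you replace the open set $U$ realizing $A=U\cap Z$ by $U\cap\Omega$, giving $A=(U\cap\Omega)\cap K$. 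Connectedness of $K$ then yields $A=K\subset\Omega$, contradicting $K\setminus\bar\Omega\neq\emptyset$. For comparison, the paper does not prove the lemma at all: it simply refers to an external result (Lemma 2.14 of the cited work of Alberti and Ottolini), which is a boundary-bumping statement for continua of the same flavour. So your route is genuinely different in that it is self-contained modulo a textbook point-set fact, and it actually proves slightly more: nothing in your argument uses the plane, so the lemma holds verbatim for a compact connected set and an open set in any Hausdorff space. The paper's approach merely trades this topology for a bibliographic pointer.
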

\begin{proof}
It readily follows, e.g., from \cite[Lemma 2.14]{AlbOtt17}.
\end{proof}
%
%
%
For the sake of completeness, we report a proof of the ensuing standard fact:
\begin{proposition}\label{prop:boundary_convex_hull}
Let \(K\subset\R^2\) be a compact connected set such that \(\H^1(K)<\infty\).
Let us denote by \(C\) the convex hull of \(K\). Then
\begin{equation}\label{eq:boundary_convex_hull}
\H^1(\partial C)\leq 2\,\H^1(K).
\end{equation}
\end{proposition}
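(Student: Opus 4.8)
The plan is to exploit the fact that $C$ is a compact convex set, so its boundary $\partial C$ is a convex curve, and to project $K$ onto $\partial C$ in two "directions" so that the two images together cover $\partial C$. Concretely, fix the point of $K$ (equivalently of $C$) with smallest $x_1$-coordinate, call it $p$, and the point with largest $x_1$-coordinate, call it $q$; both lie on $\partial C$. These two points split the convex curve $\partial C$ into two arcs, an "upper" arc $A^+$ and a "lower" arc $A^-$, with $\H^1(\partial C)=\H^1(A^+)+\H^1(A^-)$. I will show $\H^1(A^+)\le\H^1(K)$ and, symmetrically, $\H^1(A^-)\le\H^1(K)$, which gives \eqref{eq:boundary_convex_hull}.

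To bound $\H^1(A^+)$, consider the "upward" nearest-point-type projection: for each $t$ in the relevant range of first coordinates, look at the vertical line $\{x_1=t\}$; it meets $C$ in a segment, whose top endpoint lies on $A^+$, and it meets $K$ in a nonempty closed set (nonempty because $K$ is connected and contains points with first coordinate on both sides of $t$, hence by Lemma~\ref{lem:diam_vs_H1}-type connectedness/intermediate-value reasoning it meets every such vertical line). Let $\Pi^+$ be the map sending the top point of $K$ on the line $\{x_1=t\}$ to the corresponding top point of $A^+$. The key geometric claim is that $\Pi^+$ is $1$-Lipschitz on its domain: moving along $A^+$ one unit of arclength changes $(t,\text{height})$ by a vector of length $1$ whose horizontal component dominates in a way making the corresponding displacement of the top points of $K$ no smaller — more carefully, one shows that the slope of $A^+$ between two parameters is squeezed between the slopes realized by chords of $K$, by convexity of $C=\conv K$, so the arclength element of $A^+$ is at most that of the "upper boundary" of $K$, which in turn is at most $\H^1(K)$ since that upper-boundary graph is a $1$-Lipschitz-in-measure image of (a subset of) $K$. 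Then $\H^1(A^+)\le\H^1(K)$; the lower arc is handled identically with "bottom" points.

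The main obstacle I anticipate is making the projection argument rigorous when the vertical slice of $K$ is not a single point and when $\partial C$ contains vertical segments (at $p$ or $q$, or if $K$ itself has vertical pieces): one must be careful that the "top point of $K$ on the slice" varies measurably and that the comparison of arclengths survives. The clean way around this is to avoid slices entirely and argue directly: write $\partial C=A^+\cup A^-$ as above, note each arc $A^\pm$ is a graph of a convex/concave $1$-variable function (after rotating so that $p,q$ are the horizontal extremes), and show that this function's graph is the image under a $1$-Lipschitz map of an appropriate monotone rearrangement built from $K$ — or, even more simply, observe that since $C$ is convex and $K\subset C$ with $p,q\in K$, every supporting line of $C$ along $A^+$ is at distance $0$ from $K$, so $A^+$ lies in the closed convex hull of $K$ "from below," and a standard fact (the perimeter of a convex set is monotone under inclusion, $\H^1(\partial\conv K)\le\H^1(\partial\conv K')$ is the wrong direction) — hence I will instead invoke that the total turning of $A^+$ equals the turning of the upper part of $K$'s convex hull and bound arclength by a telescoping sum of chord lengths of $K$ along a finite $\varepsilon$-dense subset of $K$ realizing the extreme points of $C$, passing to the limit. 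That telescoping estimate, $\sum|k_{i+1}-k_i|\le\H^1(K)$ for points $k_i\in K$ ordered by first coordinate, follows from Lemma~\ref{lem:diam_vs_H1} applied to the connected pieces of $K$ between consecutive chosen points, and gives the required bound after taking a supremum.
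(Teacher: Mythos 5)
Your overall strategy cannot work, because the inequality you reduce everything to --- that each of the two arcs of \(\partial C\) between the horizontal extreme points has length at most \(\H^1(K)\) --- is false. Take \(K=\big(\{0\}\times[0,1]\big)\cup\big([-\epsilon,\epsilon]\times\{0\}\big)\) with \(\epsilon\) small: this is compact and connected, \(\H^1(K)=1+2\epsilon\), and \(C\) is the triangle with vertices \((-\epsilon,0)\), \((\epsilon,0)\), \((0,1)\). The leftmost and rightmost points are \(p=(-\epsilon,0)\) and \(q=(\epsilon,0)\), the lower arc is the base (length \(2\epsilon\)), and the upper arc \(A^+\) consists of the two slanted sides, of total length \(2\sqrt{1+\epsilon^2}>1+2\epsilon=\H^1(K)\); the ratio \(\H^1(A^+)/\H^1(K)\) tends to \(2\) as \(\epsilon\to 0\). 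The same example breaks the telescoping estimate you invoke at the end: with \(k_1=(-\epsilon,0)\), \(k_2=(0,1)\), \(k_3=(\epsilon,0)\) (ordered by first coordinate, all extreme points of \(C\)) one has \(|k_2-k_1|+|k_3-k_2|>\H^1(K)\). The reason is that the ``connected pieces of \(K\) between consecutive chosen points'' are not disjoint --- here both chords must be charged to the same vertical spike --- so Lemma \ref{lem:diam_vs_H1} cannot simply be summed; without the extreme-point restriction the sum can even be made arbitrarily large (alternate points on the top and bottom of a circle). Likewise, no \(1\)-Lipschitz map from a subset of \(K\) onto \(A^+\) can exist in this example, so the projection \(\Pi^+\) cannot be \(1\)-Lipschitz in general.

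The moral is that the factor \(2\) in \eqref{eq:boundary_convex_hull} is genuinely a multiplicity statement that cannot be localized to one arc: a piece of \(K\) lying in \(\mathring C\) may have to pay for two different portions of \(\partial C\) (in the example, the single spike pays for both slanted sides). This is exactly what the paper's proof organizes: it writes \(\partial C\) as \((\partial C\cap K)\cup\bigcup_i S_i\), where the \(S_i\) are the maximal segments of \(\partial C\setminus K\), projects each \(S_i\) in its inward normal direction onto points of \(K\), and then proves --- via a topological argument (disjointness of the adjacent components \(A_i\), planarity of \(K_{3,3}\)) --- that points of \(\partial C\cap K\) are charged essentially once and points of \(\mathring C\cap K\) at most twice, which yields \(\H^1(\partial C)\leq\H^1(\partial C\cap K)+\sum_i\H^1(S_i)\leq 2\,\H^1(K)\). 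If you want to salvage your plan, you must either reproduce such a global multiplicity count, or switch to an argument that is global from the start (e.g.\ Cauchy--Crofton: almost every line meeting \(\mathring C\) meets \(\partial C\) exactly twice and the connected set \(K\) at least once); the two-arc splitting at extreme points cannot be repaired.
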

\begin{proof}
We subdivide the proof into several steps:\\
{\color{blue}\textsc{Step 1.}} If \(\mathring C=\emptyset\) then \(C\) is a segment,
thus \(\partial C=C=K\) and accordingly \eqref{eq:boundary_convex_hull} is
trivially verified. Now assume that \(\mathring C\neq\emptyset\), so that
(by convexity of \(C\)) we know that there exists a continuous curve
\(\gamma\colon[0,1]\to\R^2\), which is injective on \([0,1)\), such that
\(\gamma_0=\gamma_1\in K\) and \(\gamma[0,1]=\partial C\).
We can write \(\gamma^{-1}(\R^2\setminus K)=\bigcup_{i\in I}(a_i,b_i)\),
where \(I\subset\N\) and the sets \((a_i,b_i)\) are pairwise disjoint
subintervals of \((0,1)\). Since \(C\) is the convex hull of \(K\),
we have that the set \(S_i\coloneqq\gamma(a_i,b_i)\subset\partial C\setminus K\)
is a segment for all \(i\in I\).\\
{\color{blue}\textsc{Step 2.}} Given any \(i\in I\), let us call \(m_i\) the midpoint
of \(S_i\), while \(v_i\in\R^2\) stands for the unit vector perpendicular to
\(S_i\) such that \(m_i+\R^{>0}v_i\) intersects \(C\). Since \(K\)
is compact, there exists \(\varepsilon_i>0\) such that
\((m_i+[0,\varepsilon_i]v_i)\cap K=\emptyset\). Then we denote by \(A_i\)
the connected component of \(\R^2\setminus(\partial C\cup K)\) containing
\(m_i+\varepsilon_i v_i\). Observe that \(A_i\) is open
(as \(\partial C\cup K\) is compact) and that \(S_i\subset\partial A_i\).
Moreover, we claim that
\begin{equation}\label{eq:boundary_convex_hull_claim1}
A_i\cap A_j=\emptyset\quad\text{ for every }i,j\in I\text{ with }i\neq j.
\end{equation}
We argue by contradiction: suppose that \(A_i\cap A_j\neq\emptyset\), thus
necessarily \(A_i=A_j\). By Theorem \ref{thm:accessible} we know that there
exists an injective continuous curve \(\sigma\colon[0,1]\to\R^2\) such
that \(\sigma_0=m_i\), \(\sigma_1=m_j\) and \(\sigma(0,1)\subset A_i\).
Possibly interchanging \(i\) and \(j\), we can assume that \(b_i<a_j\).
Choose \(t\in(b_i,a_j)\) such that \(\gamma_t\in K\). Since we are supposing
that the set of indexes \(I\) has cardinality at least \(2\), we know that
\(K\cap\mathring C\neq\emptyset\) (otherwise \(K\) would be contained
in \(\partial C\) and accordingly disconnected). Pick any \(y\in K\cap\mathring C\).
Either \(\gamma_0\) or \(\gamma_t\) does not belong to the closure of the
connected component of \(\R^2\setminus(\partial C\cup\sigma[0,1])\) containing \(y\).
Since \(K\) is connected, this forces \(K\) to intersect \(\sigma(0,1)\),
which leads to a contradiction. Therefore the claim
\eqref{eq:boundary_convex_hull_claim1} is proven.\\
{\color{blue}\textsc{Step 3.}} Given any \(i\in I\), we call \(R_i\) the
strip \(S_i+\R^{>0}v_i\). We also define
\(B_i\coloneqq R_i\cap\partial A_i\cap\partial C\) and
\(C_i\coloneqq(R_i\cap\partial A_i)\setminus \partial C\), which are disjoint
Borel subsets of \(K\) by \eqref{eq:boundary_convex_hull_claim1}. We claim that
\begin{subequations}\begin{align}
\H^0(B_i\cap B_j)\leq 2&\quad\text{ for every }i,j\in I\text{ with }i\neq j,
\label{eq:boundary_convex_hull_claim2}\\
\H^0(C_i\cap C_j\cap C_k)\leq 1&\quad\text{ for every }i,j,k\in I
\text{ with }i\neq j\neq k\neq i.
\label{eq:boundary_convex_hull_claim3}
\end{align}\end{subequations}
We argue by contradiction. In order to prove \eqref{eq:boundary_convex_hull_claim2},
suppose to have three distinct points \(y_1,y_2,y_3\) in \(B_i\cap B_j\).
The set \(\partial C\setminus\{y_1,y_2,y_3\}\) is made of three arcs.
By Theorem \ref{thm:accessible} we can find injective continuous curves
\(\sigma,\sigma'\colon[0,1]\to\R^2\) such that \(\sigma_0=m_i\),
\(\sigma_1=y_2\), \(\sigma(0,1)\subset A_i\), \(\sigma'_0=m_j\),
\(\sigma'_1=y_1\) and \(\sigma'(0,1)\subset A_j\). We now distinguish two cases:
\begin{itemize}
\item[\(\rm i)\)] \(S_i\) and \(S_j\) lie in the same arc of
\(\partial C\setminus\{y_1,y_2,y_3\}\). Then (up to relabeling \(y_1,y_2,y_3\))
it holds that \(\sigma[0,1]\cap\sigma'[0,1]\neq\emptyset\),
thus contradicting \eqref{eq:boundary_convex_hull_claim1}.
\item[\(\rm ii)\)] \(S_i\) and \(S_j\) lie in different arcs of
\(\partial C\setminus\{y_1,y_2,y_3\}\). Possibly relabeling \(y_1,y_2,y_3\),
we have that \(S_i\) is between \(y_1\) and \(y_3\),
while \(S_j\) is between \(y_2\) and \(y_3\).
Then it holds that \(\sigma[0,1]\cap\sigma'[0,1]\neq\emptyset\), again
contradicting \eqref{eq:boundary_convex_hull_claim1}.
\end{itemize}
Therefore \eqref{eq:boundary_convex_hull_claim2} is proven.
In order to prove \eqref{eq:boundary_convex_hull_claim3}, suppose that
\(C_i\cap C_j\cap C_k\) contains at least two distinct points \(z_1,z_2\).
By using Theorem \ref{thm:accessible} we can build an injective continuous curve
\(\sigma\colon[0,2]\to\R^2\) with \(\sigma_0=m_i\), \(\sigma_1=z_1\),
\(\sigma_2=m_j\), \(\sigma(0,1)\subset A_i\) and \(\sigma(1,2)\subset A_j\).
Possibly interchanging \(z_1\) and \(z_2\), we can assume that \(z_2\) and
\(m_k\) do not belong to the same connected component of \(C\setminus\sigma(0,2)\).
Hence (again by Theorem \ref{thm:accessible}) we can pick an injective continuous
curve \(\sigma'\colon[0,1]\to\R^2\) such that \(\sigma'_0=m_k\),
\(\sigma'_1=z_2\) and \(\sigma'(0,1)\subset A_k\). This implies that
\(\sigma(0,2)\cap\sigma'(0,1)\neq\emptyset\), whence either
\(A_i\cap A_k\neq\emptyset\) or \(A_j\cap A_k\neq\emptyset\).
In both cases property \eqref{eq:boundary_convex_hull_claim1} is violated,
thus even the claim \eqref{eq:boundary_convex_hull_claim3} is proven.\\
{\color{blue}\textsc{Step 4.}} Let \(i\in I\) be fixed. For any \(x\in S_i\)
there is \(t_x>0\) such that \(p_x\coloneqq x+t_x v_i\in B_i\cup C_i\)
and \(\big(x+(0,t_x)v_i\big)\cap\partial A_i=\emptyset\). Call
\(\pi_i\colon\R^2\to\R^2\) the orthogonal projection onto the line
containing \(S_i\), which is a \(1\)-Lipschitz map. Then one has that
\[\H^1(S_i)=\H^1\big(\pi_i\{p_x\,:\,x\in S_i\}\big)
\leq\H^1\big(\{p_x\,:\,x\in S_i\}\big)\leq\H^1(B_i)+\H^1(C_i).\]
Therefore it holds that
\begin{equation}\label{eq:boundary_convex_hull_claim4}
\H^1(\partial C)=\H^1(\partial C\cap K)+\sum_{i\in I}\H^1(S_i)
\leq\H^1(\partial C\cap K)+\sum_{i\in I}\H^1(B_i)+\sum_{i\in I}\H^1(C_i).
\end{equation}
Finally, we easily deduce from \eqref{eq:boundary_convex_hull_claim2} and
\eqref{eq:boundary_convex_hull_claim3} that
\(\sum_{i\in I}\H^1(B_i)\leq\H^1(\partial C\cap K)\) and
\(\sum_{i\in I}\H^1(C_i)\leq 2\,\H^1(\mathring C\cap K)\), respectively.
By plugging these estimates into \eqref{eq:boundary_convex_hull_claim4},
we conclude that \eqref{eq:boundary_convex_hull} is satisfied.
This completes the proof of the statement.
\end{proof}
%
%
%
%
\begin{proposition}\label{prop:move_points}
Let \(\Gamma\subset\R^2\) be a Borel set satisfying \(\H^1(\Gamma)<+\infty\).
Let \(v\in\R^2\setminus\{0\}\) and \(x\in\R^2\) be given. Fix a point \(y\in\R^2\)
that does not belong to the line \(x+\R v\). Then
\[
\H^1\big([x+tv,y]\cap\Gamma\big)=0\quad\text{ for a.e.\ }t\in\R.
\]
\end{proposition}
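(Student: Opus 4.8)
The plan is to exploit the elementary geometric fact that, since $y$ does not lie on the line $x+\R v$, the segments $[x+tv,y]$ corresponding to distinct values of $t$ can meet only at their common endpoint $y$; once this is established, a set of finite length can intersect only countably many of them in a set of positive $\H^1$-measure, and the assertion follows immediately.

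First I would fix the notation $\sigma_t:=[x+tv,y]$ for $t\in\R$ and prove the \emph{disjointness claim}: for $t_1\neq t_2$ one has $\sigma_{t_1}\cap\sigma_{t_2}\subseteq\{y\}$. Indeed, let $\ell_i$ be the line through $x+t_iv$ and $y$, and $L:=x+\R v$; since $y\notin L$, the line $\ell_i$ meets $L$ precisely in the point $x+t_iv$, and this is exactly the place where the hypothesis is used. Hence $\ell_1\neq\ell_2$ when $t_1\neq t_2$, and being two distinct lines both passing through $y$ they can intersect only at $y$; a fortiori the same is true for the subsegments $\sigma_{t_1},\sigma_{t_2}$. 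Since $\{y\}$ is $\H^1$-null, we may, if convenient, replace each $\sigma_t$ by $\sigma_t\setminus\{y\}$ without affecting any $\H^1$-measurement, and thereby regard $\{\sigma_t\}_{t\in\R}$ as a family of pairwise disjoint Borel sets.

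Next I would fix $n\in\N$, set $B_n:=\{t\in\R:\H^1(\sigma_t\cap\Gamma)>1/n\}$, and show that $B_n$ is finite. If $t_1,\dots,t_N$ were distinct elements of $B_n$, then $\sigma_{t_1}\cap\Gamma,\dots,\sigma_{t_N}\cap\Gamma$ would be pairwise disjoint Borel (hence $\H^1$-measurable) subsets of $\Gamma$, so that, by countable additivity of $\H^1$ on measurable sets,
\[
\H^1(\Gamma)\ \geq\ \sum_{j=1}^N\H^1(\sigma_{t_j}\cap\Gamma)\ >\ \frac{N}{n}.
\]
Since $\H^1(\Gamma)<+\infty$, this bounds $N$, so $B_n$ is finite (indeed of cardinality less than $n\,\H^1(\Gamma)$).

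Finally, the set $\{t\in\R:\H^1(\sigma_t\cap\Gamma)>0\}=\bigcup_{n\in\N}B_n$ is a countable union of finite sets, hence countable and therefore Lebesgue-negligible; equivalently, $\H^1\big([x+tv,y]\cap\Gamma\big)=0$ for a.e.\ $t\in\R$. I do not foresee any genuine obstacle: the only point requiring a moment's attention is the disjointness claim (ensuring the hypothesis $y\notin x+\R v$ is invoked exactly where two lines must be forced to differ), together with the routine remark that $\sigma_t\cap\Gamma$ is $\H^1$-measurable because $\Gamma$ is Borel, which is what legitimizes the use of additivity of $\H^1$ above.
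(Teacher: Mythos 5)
Your proof is correct and follows essentially the same route as the paper: the paper simply uses the half-open segments $[x+tv,y)$ to get pairwise disjoint subsets of $\Gamma$ and then invokes finiteness of $\H^1|_\Gamma$ to conclude that only countably many can have positive measure, which is exactly the countability argument you spell out via the sets $B_n$. Your extra details (the disjointness claim away from $y$ and the measurability remark) are fine but not a different method.
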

\begin{proof}
Notice that the elements of \(\big\{[x+tv,y)\cap\Gamma\big\}_{t\in\R}\) are pairwise
disjoint subsets of \(\Gamma\). Given that \(\H^1|_\Gamma\) is a finite Borel measure
on \(\R^2\), we conclude that \(\H^1\big([x+tv,y)\cap\Gamma\big)=0\)
for all but countably many \(t\in\R\), whence the statement follows.
\end{proof}
\begin{rem}\label{rmk:conn_inside}
Let \(\Omega\) be an open, connected subset of \(\R^2\).
Let \(K\) be a compact subset of \(\Omega\). Then there
exists an open, connected set \(U\subset\R^2\) such that
\(K\subset U\) and \(\overline U\subset\Omega\).

Indeed, the compactness of \(K\) ensures that we can find a finite family
\(B_1,\ldots,B_n\) of open balls such that
\(K\subset\bigcup_{i=1}^n B_i\) and \(\bigcup_{i=1}^n\overline B_i\subset\Omega\).
Given any \(1\leq i<j\leq n\), we can take a continuous curve \(\gamma_{ij}\)
in \(\Omega\) connecting a point of \(B_i\) to a point of \(B_j\).
Choose \(\delta>0\) so small that
\(\overline B(\gamma_{ij},\delta)\subset\Omega\) for all \(i<j\). Therefore, the set
\(U\coloneqq\bigcup_{i=1}^n B_i\cup\bigcup_{i<j}B(\gamma_{ij},\delta)\)
does the job, as it is a domain containing \(K\) that is compactly
contained in \(\Omega\).
\end{rem}
\begin{lemma}\label{lem:how_to_make_loops_disjoint}
Let \(\sigma\subset\R^2\) be a Jordan loop with \(\ell(\sigma)<+\infty\).
Call \(U\) the bounded connected component of \(\R^2\setminus\sigma\).
Fix a point \(p\in\sigma\) and a compact set \(K\subset U\). Then for
every \(\varepsilon>0\) there exists a Jordan loop \(\sigma'\) in \(\R^2\)
such that \(p\notin\sigma'\), \(\ell(\sigma')\leq\ell(\sigma)+\varepsilon\)
and \(K\subset U'\subset U\), where \(U'\) stands for the bounded connected
component of \(\R^2\setminus\sigma'\).
\end{lemma}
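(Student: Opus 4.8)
The plan is to produce $\sigma'$ by slightly pushing the loop $\sigma$ inward near the point $p$, so that it avoids $p$ while losing only a controlled amount of length and still enclosing the compact set $K$. First I would use compactness of $K$ and the fact that $K\subset U$ to fix a number $\rho>0$ with $\dist(K,\sigma)>2\rho$; shrinking $\varepsilon$ if necessary we may also assume $\varepsilon$ is small compared to $\rho$ and to $\ell(\sigma)$. Parametrize $\sigma$ as a closed injective curve (except at the endpoints) $\sigma\colon[0,1]\to\R^2$ with $\sigma_0=\sigma_1=p$. The idea is to replace a short sub-arc of $\sigma$ through $p$ by a detour that stays strictly inside $U$.

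Next I would localize: since $\ell(\sigma)<+\infty$, the arc-length measure is non-atomic, so I can choose $0<a<b<1$ with $b-a$ small enough that $\ell(\sigma|_{[0,a]})+\ell(\sigma|_{[b,1]})\le\varepsilon/4$; call $q\coloneqq\sigma_a$ and $q'\coloneqq\sigma_b$, two points on $\sigma$ close to $p$, at distance at most $\varepsilon/4$ from $p$. Now I need a short curve $\beta$ joining $q$ to $q'$ that lies in $U$, avoids $p$, and has length $\le|q-q'|+\varepsilon/4$. For this I invoke Theorem~\ref{thm:accessible} applied to the bounded domain $U$ (whose boundary $\sigma$ has finite length): more precisely, I first take any point $w\in U$ close to $p$ (inside $U$ but within $\varepsilon/8$ of $p$), connect $q$ to $w$ and $w$ to $q'$ by near-geodesic injective curves in $U$ via Theorem~\ref{thm:accessible}, and concatenate; a small perturbation makes the concatenation injective and keeps it off the point $p$ (since $p\in\partial U$ and the curves lie in $U$, the only issue is the endpoints $q,q'$, which differ from $p$). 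Define $\sigma'$ to be the concatenation of $\sigma|_{[a,b]}$ with $\beta$ run backwards; this is a Jordan loop because $\sigma|_{[a,b]}$ is a simple arc in $\overline U$ with endpoints $q,q'$ and $\beta$ is a simple arc in $U$ meeting $\sigma|_{[a,b]}$ only at $q,q'$ (the interior of $\beta$ lies in the open region $U$, while the interior of $\sigma|_{[a,b]}$ lies on $\partial U$). Its length is $\le\ell(\sigma|_{[a,b]})+|q-q'|+\varepsilon/4\le\ell(\sigma)+\varepsilon$, using $|q-q'|\le\ell(\sigma|_{[a,b]})$ is false in general, so instead bound $|q-q'|\le|q-p|+|p-q'|\le\varepsilon/2$ together with $\ell(\sigma|_{[a,b]})\le\ell(\sigma)$, which still gives $\ell(\sigma')\le\ell(\sigma)+\varepsilon$ after absorbing constants into the choice of $a,b$.

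Finally I would verify the enclosure $K\subset U'\subset U$, where $U'$ is the bounded component of $\R^2\setminus\sigma'$. Since $\beta\subset U$, the Jordan curve $\sigma'$ lies in $\overline U$, so $U'\subset\overline U$, and because $\sigma'\cap\Int U$ is exactly $\beta\setminus\{q,q'\}$, which is a proper arc, $U\setminus\sigma'$ has exactly two components, one of which is a thin sliver cut off near $p$ and the other of which contains all of $U$ except that sliver; the bounded component $U'$ is the latter, and it lies in $U$. To see $K\subset U'$: $K$ is at distance $>2\rho$ from $\sigma$, and $\sigma'$ differs from $\sigma$ only inside $B(p,\varepsilon)$, hence for $\varepsilon<\rho$ we have $\dist(K,\sigma')>0$ and $K$ misses $\sigma'$; since $K$ is connected (being a point of $U$ can be joined inside $U$, away from the sliver near $p$, to the bulk of $U$) $K$ must lie in the component $U'$ that contains the bulk of $U$. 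The main obstacle I expect is the careful topological bookkeeping in this last step — ensuring that the inward detour $\beta$ genuinely cuts off only a small sliver and that $U'$, not the sliver, is the bounded component containing $K$ — which is where one must argue cleanly that $\sigma'$ is a Jordan loop and track which side of it is which; everything else is a routine application of Theorem~\ref{thm:accessible} and elementary length estimates.
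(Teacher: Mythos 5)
There is a genuine gap, and it sits exactly at the point your construction leans on most heavily: the connecting arc \(\beta\). You need a curve in \(U\) joining \(q=\sigma_a\) to \(q'=\sigma_b\) of length at most \(|q-q'|+\varepsilon/4\) (and, implicitly, staying near \(p\)), and you propose to get it from Theorem \ref{thm:accessible} applied to the Jordan domain \(U\). But Theorem \ref{thm:accessible} does not produce near-geodesics: it produces curves of length at most \(|{\cdot}-{\cdot}|+\frac{\pi}{2}\,\H^1(\partial U)+\varepsilon'\), and here \(\H^1(\partial U)=\ell(\sigma)\), which is not small. With that tool your detour \(\beta\) may have length of order \(\pi\,\ell(\sigma)\), so the final estimate becomes \(\ell(\sigma')\lesssim(1+\pi)\,\ell(\sigma)\) rather than \(\ell(\sigma)+\varepsilon\). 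The fact you actually need --- that two boundary points which are close to \(p\) along \(\sigma\) can be joined \emph{inside} \(U\) by a curve of small length that moreover stays in a small neighbourhood of \(p\) --- is a nontrivial statement about rectifiable Jordan domains (the inner distance between nearby boundary points need not be controlled by their Euclidean or boundary-arc distance without an argument), and nothing in your proposal proves it. The same gap infects your enclosure step: your argument that \(K\subset U'\) rests on the claim that ``\(\sigma'\) differs from \(\sigma\) only inside \(B(p,\varepsilon)\)'', but a curve supplied by Theorem \ref{thm:accessible} is free to wander through the middle of \(U\), possibly passing between points of \(K\) or leaving \(K\) outside the bounded component of \(\R^2\setminus\sigma'\).

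The paper's proof sidesteps all of this by never asking for a short path \emph{inside} \(U\): it takes a small circle \(\partial B(p,r)\) with \(2\pi r<\varepsilon\) and \(\overline B(p,r)\) disjoint from a connected open set \(V\) with \(K\subset V\subset\overline V\subset U\), picks the arc \(\alpha\subset\partial B(p,r)\) whose interior lies in \(U\) and whose endpoints \(a,b\) lie on \(\sigma\), and replaces the sub-arc of \(\sigma\) through \(p\) by \(\alpha\). The added length is automatically at most \(2\pi r<\varepsilon\), the detour automatically stays inside \(B(p,r)\) (hence away from \(K\)), and the Jordan property and the inclusion \(K\subset U'\subset U\) follow by elementary topology. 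If you want to salvage your route, you would have to prove the local statement your argument presupposes (small inner distance between boundary points near \(p\), with the connecting curve confined near \(p\)) --- which is essentially a localized circle-type detour argument anyway --- so the circle construction is both the missing ingredient and the simpler path.
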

\begin{proof}
First of all, pick a radius \(r\in\big(0,\varepsilon/(2\,\pi)\big)\) such
that the sets \(U\cap\partial B(p,r)\), \(\sigma\setminus\overline B(p,r)\)
and \(\partial B(p,r)\setminus\overline U\) are non-empty. As observed in
Remark \ref{rmk:conn_inside}, we can choose an open, connected set \(V\subset\R^2\)
such that \(K\subset V\subset\overline V\subset U\).
Possibly shrinking \(r\), we can further assume that
\(\overline B(p,r)\cap V=\emptyset\). Fix some points
\(x\in U\cap\partial B(p,r)\) and \(y\in\sigma\setminus\overline B(p,r)\).
We denote by \(\alpha\) the arc in \(\partial B(p,r)\) containing
\(x\), whose interior lies in \(U\) and whose extreme points \(a,b\)
belong to \(\partial B(p,r)\). Notice that \(a\neq b\) as
\(\partial B(p,r)\setminus\overline U\neq\emptyset\). Call
\(\gamma,\gamma'\) the two arcs in \(\sigma\) joining \(a\)
to \(b\); say that \(\gamma\) is the one passing through \(y\).
By construction \(p\in\gamma'\), thus the Jordan loop
\(\sigma'\) obtained by concatenating \(\gamma\) and \(\alpha\) does
not contain \(p\). Given that \(\alpha\setminus\{a,b\}\subset U\),
we see that the bounded connected component \(U'\) of the complement
of \(\sigma'\) is contained in \(U\). Being \(\overline B(p,r)\) and
\(V\) disjoint, we also have \(K\subset U'\). Finally,
we conclude by pointing out that \(\ell(\sigma')=\ell(\gamma)+\ell(\alpha)
\leq\ell(\sigma)+2\pi r<\ell(\sigma)+\varepsilon\).
\end{proof}
\begin{proposition}[Approximation by analytic loops]\label{prop:approx_smooth_loops}
Let \(\sigma\) be a Jordan loop in \(\R^2\) such that \(\ell(\sigma)<+\infty\).
Call \(U\) the bounded connected component of \(\R^2\setminus\sigma\).
Fix \(\varepsilon,\delta>0\) and any compact subset \(K\) of \(U\).
Then there exists an analytic Jordan loop \(\sigma'\) in \(\R^2\)
such that \(\ell(\sigma')\leq\ell(\sigma)+\varepsilon\) and
\(K\subset U'\subset B(U,\delta)\), where \(U'\) stands for the bounded
connected component of the complement of \(\sigma'\).
\end{proposition}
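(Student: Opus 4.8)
The plan is to approximate the rectifiable Jordan loop $\sigma$ first by a piecewise-linear (polygonal) Jordan loop, and then to smooth the corners of the polygon by small circular arcs to obtain an analytic loop, controlling both the length and the position of the bounded complementary component at each stage. First I would use Lemma \ref{lem:how_to_make_loops_disjoint} together with Remark \ref{rmk:conn_inside} to reduce to a convenient situation: pick an open connected set $V$ with $K\subset V\subset\overline V\subset U$, and shrink the target so that it suffices to produce a loop $\sigma'$ with $\overline V\subset U'$ and $U'\subset B(U,\delta)$. Parametrize $\sigma$ by an injective (on $[0,1)$) constant-speed Lipschitz curve $\gamma$ with $\ell(\gamma)=\ell(\sigma)$.

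Next I would build the polygonal approximation. Fix a large $N$, set $t_j=j/N$, and let $P$ be the closed polygon with vertices $\gamma(t_0),\dots,\gamma(t_{N-1})$. Since $\gamma$ is uniformly continuous, for $N$ large each segment $[\gamma(t_{j-1}),\gamma(t_j)]$ has length at most $\ell(\gamma)/N$ plus a small error, and in fact $\sum_j|\gamma(t_j)-\gamma(t_{j-1})|\leq\ell(\gamma)$ always (the polygonal length never exceeds the curve length), so the length bound is automatic. The subtle point is that $P$ need not be a \emph{simple} loop. However, by Proposition \ref{prop:move_points} applied to $\Gamma=\sigma$ and suitable directions, and by perturbing the sample points $\gamma(t_j)$ slightly (moving each within a tiny ball, which changes the polygonal length by an arbitrarily small amount and keeps the vertices off $\overline V$), one can arrange the vertices in general position so that consecutive edges are non-collinear and non-consecutive edges are pairwise disjoint; this makes $P$ a Jordan polygon. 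Because $\gamma$ is uniformly close to $P$ for $N$ large, the Hausdorff distance between $\sigma$ and $P$ is less than $\delta/2$, and since $\overline V$ is a fixed compact set inside $U$ at positive distance from $\sigma$, for $N$ large $\overline V$ lies in the bounded component $U_P$ of $\R^2\setminus P$ and $U_P\subset B(U,\delta/2)$ — here I would invoke the stability of the Jordan-curve "inside" region under small Hausdorff perturbations (a standard degree-theory / winding-number argument, or the fact that the indicator of the inside region is continuous in $L^1$ under $C^0$-perturbation of the parametrization). Apply Lemma \ref{lem:how_to_make_loops_disjoint} once more to the polygon if one needs a marked point avoided; otherwise this step is already complete and delivers a polygonal Jordan loop $P$ with $\ell(P)\leq\ell(\sigma)$, $\overline V\subset U_P\subset B(U,\delta/2)$.

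Finally I would replace $P$ by an analytic loop. At each vertex $v_j$ of $P$, cut off the corner by an arc of a circle tangent to the two incident edges, of radius $\rho_j>0$; the resulting curve $\sigma'$ is $C^1$, is analytic away from finitely many junction points, and — after one further round of corner-rounding or by using a standard mollification of the arc-length parametrization that is supported near the junctions and changes length by at most $\varepsilon/2$ — can be taken genuinely analytic. Replacing a corner by an inscribed tangent arc strictly \emph{decreases} length (the arc is shorter than the two tangent segments it replaces) and moves the curve by at most $\rho_j$; hence for $\rho_j$ small, $\ell(\sigma')\leq\ell(P)\leq\ell(\sigma)$ (with the extra $\varepsilon/2$ budget absorbing the final analytic mollification so that $\ell(\sigma')\leq\ell(\sigma)+\varepsilon$), the loop stays simple, and the bounded component $U'$ of its complement satisfies $\overline V\subset U'\subset B(U_P,\delta/2)\subset B(U,\delta)$. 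Choosing the rounding to stay outside $\overline V$ (possible since $\operatorname{dist}(\overline V,P)>0$) guarantees $K\subset V\subset U'$, completing the proof.

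The main obstacle is the simplicity and inside-region control under approximation: ensuring the polygonal curve is a genuine Jordan loop (not merely an immersed polygon), and that its bounded complementary component still traps $K$ and is contained in $B(U,\delta)$. This requires a careful general-position perturbation of the sample points (justified by Proposition \ref{prop:move_points}) and a stability statement for the Jordan "inside" under uniform approximation; the corner-rounding step, by contrast, is routine and only shortens the curve.
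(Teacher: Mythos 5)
Your overall strategy (inscribed polygon, then smoothing) is plausible in spirit, but its central step has a genuine gap: producing a \emph{simple} polygonal approximant. Proposition \ref{prop:move_points} cannot do what you ask of it --- it only states that, for a fixed set $\Gamma$ of finite $\H^1$-measure, almost every segment in a pencil meets $\Gamma$ in an $\H^1$-null set; it says nothing about self-intersections of a closed polygon. More seriously, no ``general position'' perturbation of the vertices can remove the obstruction: a transversal crossing of two non-adjacent edges is stable under small perturbations, and for a rectifiable Jordan curve with near-self-contacts at all scales the inscribed polygon can have such crossings for arbitrarily fine partitions. The natural repair (loop erasure or rerouting of the polygon) only decreases length, but it destroys exactly the control you need, namely that $\overline V$ (hence $K$) still lies in the bounded complementary component and that this component stays inside $B(U,\delta)$; your winding-number stability argument applies to the uniformly close, possibly non-simple polygon, not to a loop-erased modification of it. This is the delicate point of the whole statement, and the proposal does not close it.

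The analyticity step is also underdeveloped: rounding corners by tangent circular arcs yields a $C^{1,1}$, piecewise analytic curve, and mollifying the parametrization yields $C^\infty$, not analytic; genuine analyticity would need a further argument (for instance truncating the Fourier series of a smooth embedded parametrization and invoking $C^1$-stability of embeddedness), which you only gesture at. For comparison, the paper's proof avoids both difficulties simultaneously: it mollifies $\chi_U$ with an \emph{analytic} kernel, uses $\int|\nabla f_{\varepsilon_n}|\to\H^1(\partial U)=\ell(\sigma)$ together with the coarea formula and Fatou's lemma to find a level $t$ with $\varliminf_n\H^1(\partial S^n_t)\leq\ell(\sigma)$, and gets simplicity and analyticity of the chosen level curve from the Morse--Sard theorem and the analytic implicit function theorem, while the inclusions $K\subset V\subset U'\subset B(U,\delta)$ follow at once from $f_{\varepsilon_n}\equiv 1$ on $V$ and $f_{\varepsilon_n}\equiv 0$ off $B(U,\delta)$. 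Either adopt that route, or supply a complete proof of polygonal approximation of a rectifiable Jordan curve with length and interior control, which is a nontrivial task in itself.
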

\begin{proof}
The argument is inspired by the proof of \cite[Theorem 3.42]{AmbrosioFuscoPallara}.

Let \(\rho\) be an analytic convolution kernel in \(\R^2\) and denote by
\(f_\varepsilon\coloneqq\chi_U*\rho_\varepsilon\) the mollified
function for all \(\varepsilon>0\). It is well-known
(cf.\ the discussion preceding \cite[Proposition 3.7]{AmbrosioFuscoPallara}) that
\begin{equation}\label{eq:approx_smooth_loops_aux}
\ell(\sigma)=\H^1(\partial U)=
\lim_{\varepsilon\searrow 0}\int_{\R^2}\big|\nabla f_\varepsilon(x)\big|\,{\rm d}x.
\end{equation}
Fix any open, connected set \(V\subset\R^2\) with \(K\subset V\) and
\(\overline V\subset U\), whose existence is proven in Remark \ref{rmk:conn_inside}.
Set \(\delta'\coloneqq\dist(\overline V,\R^2\setminus U)>0\) and choose a
sequence \((\varepsilon_n)_n\subset\big(0,\min\{\delta,\delta'\}\big)\)
converging to \(0\). The superlevel sets \(S^n_t\) are defined as
\(S^n_t\coloneqq\big\{x\in\R^2\,\big|\,f_{\varepsilon_n}(x)>t\}\) for every
\(n\in\N\) and \(t\in(0,1)\). By combining the Morse-Sard theorem with the analytic
implicit function theorem we know that there exists a negligible set \(N\subset(0,1)\)
such that \(S^n_t\) is an analytic domain for
all \(n\in\N\) and \(t\in(0,1)\setminus N\). Notice that
\[
\ell(\sigma)\overset{\eqref{eq:approx_smooth_loops_aux}}=
\lim_{n\to\infty}\int_{\R^2}\big|\nabla f_{\varepsilon_n}(x)\big|\,{\rm d}x
=\lim_{n\to\infty}\int_0^1\H^1(\partial S^n_t)\,{\rm d}t
\geq\int_0^1\varliminf_{n\to\infty}\H^1(\partial S^n_t)\,{\rm d}t
\]
by coarea formula and Fatou lemma. Then there exists \(t\in(0,1)\setminus N\)
for which it holds that \(\varliminf_n\H^1(\partial S^n_t)\leq\ell(\sigma)\).
Let us choose \(n\in\N\) so that \(\H^1(\partial S^n_t)\leq\ell(\sigma)+\varepsilon\).
Given that \(f_{\varepsilon_n}\equiv 0\) on \(\R^2\setminus B(U,\delta)\)
and \(f_{\varepsilon_n}\equiv 1\) on \(V\), one has that
\(V\subset S^n_t\subset B(U,\delta)\). Being \(V\) a connected set, it is entirely
contained in one connected component \(U'\) of \(S^n_t\). Calling \(\sigma'\)
the boundary of \(U'\), we see that \(\sigma'\) is an analytic Jordan loop and
\(\ell(\sigma')\leq\H^1(\partial S^n_t)\leq\ell(\sigma)+\varepsilon\).
Since \(K\subset V\subset U'\subset S^n_t\subset B(U,\delta)\), the statement
is finally achieved.
\end{proof}
\begin{rem}\label{rmk:intersection_1pt}
Let \(\Omega\subset\R^2\) be a bounded domain with \(\H^1(\partial\Omega)<+\infty\).
Let \(U,U'\) be two distinct connected components of \(\R^2\setminus\overline\Omega\).
Call \(\sigma\) (resp.\ \(\sigma'\)) the boundary of \(U\) (resp.\ \(U'\)), which
is a Jordan loop by Theorem \ref{thm:H1_finite_Jordan_domain}. Then it can be
readily checked that \(\sigma\cap\sigma'\) contains at most one point.
\end{rem}
\begin{proposition}\label{prop:boundary_smooth}
Let \(\Omega\subset\R^2\) be a bounded domain with \(\H^1(\partial\Omega)<+\infty\).
Let \(K\subset\R^2\) be a compact set such that \(K\subset\Omega\). Let
\(U\subset\R^2\) be an open set satisfying \(\overline\Omega\subset U\).
Then for every \(\varepsilon>0\) there exists a bounded domain \(\Omega'\subset\R^2\)
with the following properties:
\begin{itemize}
\item[\(\rm i)\)] \(K\subset\Omega'\subset\overline{\Omega'}\subset U\),
\item[\(\rm ii)\)] \(\H^1(\partial\Omega')<\H^1(\partial\Omega)+\varepsilon\),
\item[\(\rm iii)\)] \(\partial\Omega'\) is the union of finitely many pairwise disjoint
analytic Jordan loops.
\end{itemize}
\end{proposition}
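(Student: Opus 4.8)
The plan is to build $\Omega'$ in two stages. First, since $K$ is compact and $K\subset\Omega$, Remark \ref{rmk:conn_inside} gives an open connected set $W$ with $K\subset W\subset\overline W\subset\Omega$; enlarging $K$ if necessary I may assume $K=\overline W$ is a compact connected set sitting well inside $\Omega$. Now $\Omega$ is a bounded domain with $\H^1(\partial\Omega)<\infty$, so by Theorem \ref{thm:H1_finite_Jordan_domain} each connected component of $\R^2\setminus\overline\Omega$ has as its boundary a Jordan loop of finite length, and these boundaries are pairwise essentially disjoint (Remark \ref{rmk:intersection_1pt}). Exactly one of these components, call it $U_0$, is unbounded, and $\Omega$ lies in the bounded complementary component of $\sigma_0:=\partial U_0$; all the other components $U_1,U_2,\dots$ are bounded "holes'' whose boundary loops $\sigma_j$ satisfy $\sum_j\ell(\sigma_j)\le\H^1(\partial\Omega)<\infty$. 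The idea is to approximate $\Omega$ from inside by the bounded domain obtained by taking the bounded component of $\R^2\setminus\sigma_0$ and removing (slightly fattened versions of) the closures of finitely many of the holes $U_j$.

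The key steps, in order, are as follows. (1) Fix $\delta>0$ so small that $B(\overline\Omega,\delta)\subset U$ and $\dist(K,\partial\Omega)>2\delta$. (2) Since $\sum_{j\ge 1}\ell(\sigma_j)<\infty$ and $\diam(U_j)\le\ell(\sigma_j)$ by Lemma \ref{lem:diam_vs_H1}, only finitely many holes have diameter $\ge\delta$; choose $N$ so that $\sum_{j>N}\ell(\sigma_j)<\varepsilon/4$, and note each $U_j$ with $j>N$ then has diameter $<\delta$, hence lies within $\delta$ of $\partial\Omega$, hence is disjoint from $B(K,\delta)$. (3) Apply Proposition \ref{prop:approx_smooth_loops} to the outer loop $\sigma_0$: taking as the "compact set inside'' a connected open neighbourhood of $\overline\Omega$'s bounded complementary component minus a tiny collar — more precisely, apply it to the bounded domain $D_0$ bounded by $\sigma_0$ with compact subset any $\overline W_0\supset\overline\Omega$ compactly contained in $D_0$ — to get an analytic Jordan loop $\sigma_0'$ with $\ell(\sigma_0')\le\ell(\sigma_0)+\varepsilon/4$ whose bounded component $D_0'$ satisfies $\overline\Omega\subset D_0'\subset B(D_0,\delta)$. (4) For each hole $U_j$ with $1\le j\le N$, apply Proposition \ref{prop:approx_smooth_loops} to $\sigma_j$ with compact subset $\overline V_j$, where $V_j$ is a connected open set with $U_j\cap\overline\Omega^{\,c}\supset$... — rather, apply it to get an analytic Jordan loop $\sigma_j'$ with $\ell(\sigma_j')\le\ell(\sigma_j)+\varepsilon/(4N)$, whose bounded component $D_j'$ satisfies $\overline{U_j}\subset D_j'\subset B(U_j,\delta)$, and so small a fattening that $D_j'$ stays disjoint from $B(K,\delta)$ and from $D_k'$ for $k\ne k$ (possible because the $\overline{U_j}$ for $1\le j\le N$ are pairwise disjoint compact sets, being disjoint closed connected pieces of $\overline\Omega^{\,c}$ apart from at most finitely many one-point touchings, which one removes using Lemma \ref{lem:how_to_make_loops_disjoint} before smoothing) and disjoint from $\overline{D_k'}$. (5) Set
\[
\Omega'\ :=\ D_0'\ \setminus\ \bigcup_{j=1}^N\overline{D_j'}.
\]
Then $\Omega'$ is open; it is connected because $\Omega\subset\Omega'$ and $\Omega$ is connected while each removed piece $\overline{D_j'}$ is a small topological disk not separating $\Omega'$ (its complement in $D_0'$ stays connected, as the $D_j'$ are disjoint disks); $\partial\Omega'=\sigma_0'\cup\bigcup_{j=1}^N\sigma_j'$ is a finite union of pairwise disjoint analytic Jordan loops, giving (iii); $\H^1(\partial\Omega')=\ell(\sigma_0')+\sum_{j=1}^N\ell(\sigma_j')\le\sum_{j\ge 0}\ell(\sigma_j)+\varepsilon/2<\H^1(\partial\Omega)+\varepsilon$, giving (ii); and $K\subset\Omega'$ because $K\subset\Omega\subset D_0'$ and $K$ is disjoint from each $B(U_j,\delta)\supset D_j'$, while $\overline{\Omega'}\subset\overline{D_0'}\subset B(\overline\Omega,\delta)\subset U$, giving (i).

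The main obstacle is Step (4): ensuring the approximating loops $\sigma_j'$ of the holes remain pairwise disjoint, disjoint from $\sigma_0'$, and disjoint from $K$, all simultaneously. The two pieces one must control are (a) the finitely many points where distinct boundary loops $\sigma_j$ may touch (Remark \ref{rmk:intersection_1pt}), handled by first perturbing via Lemma \ref{lem:how_to_make_loops_disjoint} to separate them before invoking Proposition \ref{prop:approx_smooth_loops}, and (b) the size of the fattening $\delta$, which must be chosen small enough relative to the pairwise distances among the finitely many compact sets $\{K\}\cup\{\overline{U_j}:1\le j\le N\}$ and to $\dist(\overline\Omega,\R^2\setminus U)$; since these are finitely many positive numbers, a valid choice exists. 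The rest — openness, connectedness, and the measure and inclusion bookkeeping — is routine.
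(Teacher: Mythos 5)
Your overall architecture (keep the outer loop $\partial W$, remove finitely many large holes, disjointify with Lemma \ref{lem:how_to_make_loops_disjoint}, smooth with Proposition \ref{prop:approx_smooth_loops}, then take a set difference) is the paper's, and your criterion for discarding the small holes (diameter $<\delta$, hence contained in $B(\overline\Omega,\delta)\subset U$) is a perfectly good variant of the paper's covering argument. The genuine gap is the \emph{direction} in which you approximate the loops. Proposition \ref{prop:approx_smooth_loops} only guarantees that the analytic Jordan region $U'$ contains a prescribed \emph{compact subset} of the original Jordan region and lies in its $\delta$-neighbourhood; it is not an outward fattening and never promises $\overline{U}\subset U'$. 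Your step (3) is already impossible as stated: since $\sigma_0=\partial W\subset\partial\Omega$, the set $\overline\Omega$ touches $\partial D_0$, so there is no compact $\overline{W_0}$ with $\overline\Omega\subset W_0\subset\overline{W_0}\subset D_0$ to feed into the proposition, and nothing forces the resulting $D_0'$ to contain $\overline\Omega$, or even $\Omega$.

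Step (4) is worse than unjustified: it is self-contradictory in exactly the situation Remark \ref{rmk:intersection_1pt} allows. If two hole closures touch, $\overline{U_i}\cap\overline{U_j}=\{p\}$ (or a hole touches $\sigma_0$ at $p$), then any open sets $D_i'\supset\overline{U_i}$ and $D_j'\supset\overline{U_j}$ both contain $p$ and cannot be disjoint, no matter how small the fattening; and pre-applying Lemma \ref{lem:how_to_make_loops_disjoint} does not help, because that lemma moves the loop \emph{inward} (its new bounded region is contained in the old one), which your subsequent requirement $\overline{U_j}\subset D_j'$ immediately undoes. The paper's proof avoids this by approximating the holes from inside: the new loops only need to keep the compact sets $(\R^2\setminus U)\cap H_i$ (a compact core of each hole) in their bounded components and to stay away from $K$, never the full closures; the thin slivers of the holes that are thereby left inside $\Omega'$ lie within $\delta$ of $\partial\Omega$ and hence in $U$, so property (i) survives. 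Finally, since your fattened sets $\overline{D_j'}$ would eat into $\Omega$ and $D_0'$ need not contain $\Omega$, the connectedness argument ``$\Omega'$ is connected because $\Omega\subset\Omega'$'' collapses; connectedness of $D_0'\setminus\bigcup_j\overline{D_j'}$ must instead be deduced from the pairwise disjointness of the Jordan regions themselves, as in the paper's construction. Repairing your plan along these lines essentially reproduces the paper's proof.
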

\begin{proof}
We denote by \(\{H_i\}_{i\in I}\) the bounded connected components of
\(\R^2\setminus\overline\Omega\), while \(W\) stands for the unbounded
one. We set \(\sigma_i\coloneqq\partial H_i\) for all \(i\in I\) and
\(\sigma\coloneqq\partial W\), which are Jordan loops by Theorem
\ref{thm:H1_finite_Jordan_domain}. Since \(\{H_i\}_{i\in I}\) is an open
covering of the compact set \(\R^2\setminus(U\cup W)\), we can select a
finite subfamily \(F\) of \(I\) such that
\(\R^2\setminus(U\cup W)\subset\bigcup_{i\in F}H_i\). Equivalently, we
have that \((\R^2\setminus W)\setminus\bigcup_{i\in F}H_i\subset U\).
As observed in Remark \ref{rmk:intersection_1pt}, any two of the curves
\(\sigma\) and \(\sigma_i\) can intersect in at most one point, thus
in particular \(\ell(\sigma)+\sum_{i\in F}\ell(\sigma_i)\leq\H^1(\partial\Omega)\).
By repeatedly applying Lemma \ref{lem:how_to_make_loops_disjoint}, we can replace
the curves \(\sigma,\sigma_i\) with some pairwise disjoint Jordan loops
\(\sigma',\sigma'_i\) satisfying the following conditions:
\begin{itemize}
\item[\(\rm a)\)] \(\ell(\sigma')<\ell(\sigma)+\varepsilon/(\sharp F+1)\) and
\(\ell(\sigma'_i)<\ell(\sigma_i)+\varepsilon/(\sharp F+1)\) for all \(i\in F\).
\item[\(\rm b)\)] \((\R^2\setminus U)\cap H_i\) lies in the bounded connected
component \(H'_i\) of \(\R^2\setminus\sigma'_i\) for all \(i\in F\), while
\((\R^2\setminus U)\cap W\) lies in the unbounded connected
component \(W'\) of \(\R^2\setminus\sigma'\).
\item[\(\rm c)\)] \(K\cap\overline{H'_i}=\emptyset\) for all \(i\in F\) and
\(K\cap\overline{W'}=\emptyset\).
\end{itemize}
Thanks to Proposition \ref{prop:approx_smooth_loops}, we can even assume that
the Jordan loops \(\sigma'\) and \(\sigma'_i\) are analytic.
Let us set
\(\Omega'\coloneqq(\R^2\setminus\overline{W'})\setminus\bigcup_{i\in F}\overline{H'_i}\).
The fact that \(\Omega'\) is a bounded domain and item iii) follow from the
very construction of \(\Omega'\). Moreover, i) is granted by b) and c).
Finally, we deduce from a) that
\[
\H^1(\partial\Omega')=\ell(\sigma')+\sum_{i\in F}\ell(\sigma'_i)<
\ell(\sigma)+\sum_{i\in F}\ell(\sigma_i)+\varepsilon
\leq\H^1(\partial\Omega)+\varepsilon,
\]
thus proving item ii). This concludes the proof of the statement.
\end{proof}
\section{Painlev\'{e} length estimates}\label{sec:Painleve}
Let us first recall the definition of Painlev\'e length.

\begin{definition}
The Painlev\'e length of a compact set $K \subset \R^2$, denoted
$\kappa(K)$, is the infimum of numbers $\ell$ with the following property: for every
open set $U$ containing $K$ there exists an open set $V$ such that $K \subset V \subset U$
and $\partial V$ is a finite union of disjoint analytic Jordan curves of total length at
most $\ell$.
\end{definition}

In \cite[p.\ 25]{Dudziak} the following estimate was stated.

\begin{proposition}\label{prop:sharppainleve}
For every compact set $K \subset \R^2$ the following inequality holds:
\begin{equation}\label{eq:sharppainleve}
\kappa(K) \le \pi\,\H^1(K).
\end{equation}
\end{proposition}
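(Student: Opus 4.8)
The plan is to prove the estimate $\kappa(K) \le \pi\,\H^1(K)$ by a covering-and-convexification argument. Since the Painlev\'e length is an outer-regular quantity, it suffices to produce, for an arbitrary open set $U \supset K$, a finite union $V$ of disjoint analytic Jordan domains with $K \subset V \subset U$ and $\H^1(\partial V)$ close to $\pi\,\H^1(K)$. First I would reduce to the case $\H^1(K)<\infty$, since otherwise there is nothing to prove. The main idea is that a connected piece of $K$ of small diameter $d$ can be engulfed by a set whose boundary has length controlled by $\pi d$ (for instance a thin neighbourhood of its convex hull, or a disc of radius slightly more than $d/2$, whose boundary has length $\pi d$), and then to decompose $K$ efficiently into such pieces.

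The key steps, in order, are as follows. Step 1: Fix $\varepsilon>0$ and an open $U \supset K$, and use compactness of $K$ together with the finiteness of $\H^1(K)$ (in particular the fact that $\H^1$-measurable sets of finite measure can be split into finitely many pieces of arbitrarily small diameter with almost no loss of total measure, via a Vitali-type or $\delta$-fine covering) to write $K \subset \bigcup_{j=1}^N K_j$ where each $K_j$ is relatively closed in $K$, has diameter at most some small $\rho$, and $\sum_j \H^1(K_j) \le \H^1(K) + \varepsilon$; choosing $\rho$ small enough that each $\rho$-piece stays in $U$. Step 2: For each $j$, enclose $K_j$ in an open set $W_j \subset U$ whose boundary is a single analytic Jordan loop with $\H^1(\partial W_j) \le \pi(\diam K_j) + \varepsilon/N$; here one takes a slightly-enlarged disc (or convex hull, invoking Proposition~\ref{prop:boundary_convex_hull} to bound $\H^1(\partial \conv(K_j)) \le 2\H^1(K_j)$) and then smooths it using Proposition~\ref{prop:approx_smooth_loops}. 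Step 3: By Lemma~\ref{lem:diam_vs_H1}, $\diam K_j \le \H^1(K_j)$ when $K_j$ is connected; for a general (possibly disconnected) $K_j$ of diameter at most $\rho$, the enclosing disc still has boundary length at most $\pi\rho$, which over all $j$ contributes at most $\pi \rho N$, and one arranges the covering so this total is $\le \varepsilon$ — more precisely one wants the enclosing sets' boundary lengths to sum to at most $\pi \H^1(K) + C\varepsilon$, which requires the pieces $K_j$ to be chosen so that $\diam K_j$ is comparable to $\H^1(K_j)$, i.e.\ the pieces should essentially be connected. Step 4: Take $V$ to be (a disjointification of) $\bigcup_j W_j$: overlapping loops are merged or pushed apart using Lemma~\ref{lem:how_to_make_loops_disjoint} at negligible length cost, yielding finitely many disjoint analytic Jordan loops bounding an open $V$ with $K \subset V \subset U$ and $\H^1(\partial V) \le \pi\,\H^1(K) + C\varepsilon$. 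Letting $\varepsilon \to 0$ gives $\kappa(K) \le \pi\,\H^1(K)$.

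The main obstacle is Step 3 — controlling the enclosing length by $\pi \H^1(K)$ rather than by something like $2\H^1(K)$. A naive covering by small discs gives boundary length $\sum \pi \rho_j$ where $\rho_j = \diam K_j$, and without extra structure $\sum \diam K_j$ can be comparable to $\H^1(K)$ only up to a constant, not with the sharp constant $1$ (hence sharp factor $\pi$). The resolution is to exploit connectedness: one should first decompose $K$ itself into connected (or nearly connected) pieces of small diameter, so that for each piece $\diam K_j \le \H^1(K_j)$ by Lemma~\ref{lem:diam_vs_H1}, and then $\sum_j \pi \diam K_j \le \pi \sum_j \H^1(K_j) \le \pi(\H^1(K)+\varepsilon)$. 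Making the decomposition into connected small-diameter pieces both efficient (no loss in total $\H^1$) and finite is the delicate point; one route is to use the structure of compact sets of finite $\H^1$-measure together with a stopping-time argument that chops $K$ along a fine grid, discarding the low-measure "boundary" contributions. I expect this combinatorial-geometric bookkeeping, rather than any single hard estimate, to be where the real work lies.
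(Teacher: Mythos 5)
Your overall skeleton (fix an open $U\supset K$, cover $K$ by small pieces, enclose each piece, merge and smooth) is the standard one --- note, incidentally, that the paper does not prove this proposition at all, but quotes it from Dudziak --- however the two steps where the constant $\pi$ is supposed to be produced do not work as written. First, the enclosure step: a set of diameter $d$ is in general \emph{not} contained in a disc of radius $d/2$ (an equilateral triangle of side $d$ needs circumradius $d/\sqrt{3}$, cf.\ Jung's theorem), so ``a disc of radius slightly more than $d/2$, whose boundary has length $\pi d$'' is false; and the alternative you invoke, Proposition \ref{prop:boundary_convex_hull}, bounds $\H^1(\partial\conv K_j)$ by $2\,\H^1(K_j)$ and only for \emph{connected} $K_j$, which is not the quantity you control. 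The fact actually needed is the elementary convex-geometry estimate (Cauchy's formula: the perimeter of a convex body equals the integral of its widths): every bounded convex set of diameter $d$ has perimeter at most $\pi d$. Applying this to slightly enlarged convex hulls of the covering pieces is what produces the sharp constant $\pi$.

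Second, your Step 3 ``resolution'' via a decomposition of $K$ into connected small-diameter pieces with $\diam K_j\le\H^1(K_j)$ is both impossible and unnecessary. It is impossible precisely in the sharp case: for a totally disconnected $K$ of positive measure (such as the set of Example \ref{ex:example_sharp_painleve}, where $\kappa(K)=\pi\,\H^1(K)$) the only connected subsets are points, so no such decomposition exists. It is unnecessary because the control $\sum_j\diam K_j\le\H^1(K)+\varepsilon$ is not to be extracted from connectedness: it is handed to you by the definition of Hausdorff measure, i.e.\ by choosing a near-optimal $\delta$-cover $\{A_j\}$ of $K$ with $\sum_j\diam A_j\le\H^1_\delta(K)+\varepsilon\le\H^1(K)+\varepsilon$, then replacing each $A_j$ by a slightly enlarged open convex neighbourhood of its convex hull and using compactness of $K$ to pass to a finite subfamily contained in $U$. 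Relatedly, your Step 1 grid/Vitali decomposition controls $\sum_j\H^1(K_j)$, which is the wrong quantity: for a grid at scale $\rho$ the sum $\sum_j\diam K_j$ is a box-counting--type quantity and need not be close to $\H^1(K)$, even when $\H^1(K)=0$. With the cover taken from the definition of $\H^1$ and the perimeter-versus-diameter bound for convex hulls, the rest of your plan (take $V$ to be the union of the enclosures, use $\partial V\subset\bigcup_j\partial W_j$, and smooth via Proposition \ref{prop:boundary_smooth}) does go through and yields $\kappa(K)\le\pi\,\H^1(K)$, and in fact $\kappa(K)\le\pi\,\H^1_\infty(K)$, as remarked after the proposition.
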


In \cite{Dudziak} it was also noted that the estimate \eqref{eq:sharppainleve}
is the best possible one for general compact sets, when the Hausdorff measure
$\H^1$ is replaced by the (possibly smaller) Hausdorff content $\H_\infty^1$.
In what  follows we prove that the
Painlev\'e length estimate \eqref{eq:sharppainleve} can be improved for connected sets.
\begin{theorem}[Painlev\'{e} estimate for connected sets]\label{thm:Painleve_conn}
For every compact, connected set $K \subset \R^2$ the following inequality holds:
\begin{equation}\label{eq:sharppainleve_connected}
\kappa(K) \le 2\,\H^1(K).
\end{equation}
\end{theorem}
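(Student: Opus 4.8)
The plan is to exploit the convex-hull estimate from Proposition \ref{prop:boundary_convex_hull} together with the approximation machinery developed in Section \ref{sec:aux}, in particular Proposition \ref{prop:boundary_smooth}. First I would dispose of the degenerate case: if $K$ has empty interior of its convex hull, then $K$ is contained in a line segment, $\kappa(K)$ is easily seen to be bounded by twice $\H^1(K)$ (one can enclose a segment of length $L$ in an arbitrarily thin analytic ``stadium''-type loop of length slightly more than $2L$), and we are done. So assume the convex hull $C$ of $K$ has nonempty interior.

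The core idea is as follows. Given an open set $U\supset K$, we want to produce an open set $V$ with $K\subset V\subset U$ whose boundary is a finite union of disjoint analytic Jordan curves of total length at most $2\,\H^1(K)+\varepsilon$. By Proposition \ref{prop:boundary_convex_hull} we have $\H^1(\partial C)\le 2\,\H^1(K)$. So a first attempt is to take $V$ to be (a thin neighborhood of) the convex hull $C$ itself — but $C$ need not be contained in $U$, and its boundary need not be analytic. To fix the first problem, I would shrink: consider instead, for small $\delta>0$, a domain $\Omega_\delta$ that contains $K$, is contained in $B(C,\delta)\cap U$ is false in general since $C\not\subset U$; rather, one should work the other way. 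The correct route is to apply Proposition \ref{prop:boundary_smooth} (or Proposition \ref{prop:approx_smooth_loops}) to a domain whose boundary we can control by $\H^1(\partial C)$. Concretely: since $\partial C$ is a Jordan loop of finite length enclosing the bounded domain $\mathring C$, and $K\subset\mathring C\cup\partial C$, but we need $K$ in the \emph{interior}; if some points of $K$ lie on $\partial C$ this is a minor issue handled by a slight outward perturbation of the relevant arcs of $\partial C$ at negligible length cost, or by noting $K\cap\partial C$ contributes its full length to $\H^1(K)$ anyway. Then apply Proposition \ref{prop:approx_smooth_loops} with the Jordan loop $\sigma=\partial C$, the compact set $K$ (after the perturbation, compactly inside), and parameter $\delta$ chosen small enough that $B(\mathring C,\delta)\subset U$ — but again this fails since $C\not\subset U$ in general.

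Let me restate the genuinely correct approach, which is the natural one: given $U\supset K$ open, first intersect and localize. The set $K$ is compact and connected; cover it by finitely many small balls inside $U$ and, using the structure, reduce to the case where $U$ is a bounded neighborhood. Then the key observation is that $\H^1(\partial(\conv K))\le 2\H^1(K)$ is \emph{scale-invariant and monotone under the operations we need}: for the Painlev\'e length we are allowed to replace $K$ by $\conv K$ at the cost of the factor $2$, provided $\conv K\subset U$, which we \emph{cannot} assume. The resolution is that $\kappa$ is defined via \emph{all} open $U\supset K$, so we must handle arbitrary thin $U$. Here is the fix: apply Proposition \ref{prop:boundary_smooth} not to $\conv K$ but observe that $\H^1(\partial\Omega)$ for \emph{any} domain $\Omega\supset K$ with $K$ compactly inside and finite-length boundary is at least $\H^1(\partial(\conv\Omega))\ge\diam K$; that is the wrong direction too. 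The honest plan: take $V_0$ a small connected open neighborhood of $K$ with $\overline{V_0}\subset U$ (Remark \ref{rmk:conn_inside}); its boundary need not be short. Instead replace $V_0$ by its convex hull when that fits: if $\conv K\subset U$ we use Propositions \ref{prop:approx_smooth_loops}/\ref{prop:boundary_smooth} on $\conv K$ to get the bound $2\H^1(K)+\varepsilon$ directly. If $\conv K\not\subset U$, subdivide: write $K=\bigcup K_j$ as a finite union of connected pieces of small diameter (possible since $K$ is a continuum, using Lemma \ref{lem:diam_vs_H1} to control lengths) such that each $\conv K_j\subset U$, arrange the pieces to overlap in a connected pattern, take the convex hull of each, connect consecutive hulls by thin tubes inside $U$, and apply the smoothing of Proposition \ref{prop:boundary_smooth}. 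Summing, the total boundary length is at most $\sum_j 2\H^1(K_j)+(\text{tube contributions})+\varepsilon$; because the $K_j$ can be chosen with $\sum_j\H^1(K_j)$ arbitrarily close to $\H^1(K)$ (overlaps made negligible) and the tubes arbitrarily thin, this is $\le 2\H^1(K)+\varepsilon$.

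I expect the \textbf{main obstacle} to be precisely this localization: producing the finite decomposition $K=\bigcup K_j$ into connected subcontinua of small diameter whose convex hulls fit inside the given $U$, while keeping $\sum_j\H^1(K_j)$ close to $\H^1(K)$ and keeping the ``nerve'' of the cover connected so the assembled region is a single domain — and then checking that the thin connecting tubes and the smoothing step (which may slightly enlarge the region, controlled by the $\delta$ in Proposition \ref{prop:approx_smooth_loops}) still land inside $U$. Once the decomposition and assembly are in place, Proposition \ref{prop:boundary_convex_hull} supplies the factor $2$ on each piece and Proposition \ref{prop:boundary_smooth} supplies the analytic Jordan loops, so the length bookkeeping closes with room to spare for $\varepsilon$.
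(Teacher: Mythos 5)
Your final strategy is the same as the paper's: split $K$ into finitely many subcontinua whose convex hulls fit inside $U$, bound each hull boundary by Proposition \ref{prop:boundary_convex_hull}, take the union of slightly enlarged hulls, and regularize via Proposition \ref{prop:boundary_smooth}. However, the step you yourself flag as the main obstacle --- the finite decomposition $K=\bigcup_j K_j$ into compact connected pieces of small diameter with $\sum_j\H^1(K_j)$ close to $\H^1(K)$ --- is precisely where the paper's proof does its real work, and you assert it rather than prove it. It is not automatic: cutting a continuum by a small ball can a priori produce infinitely many components, and an arbitrary cover (e.g.\ by grid squares) gives pieces that are neither connected nor finite in number. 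The paper resolves this as follows: for each $x\in K$ pick a radius $r_x\in\big(r,\dist(K,\R^2\setminus U)\big)$ with $\H^0\big(K\cap\partial B_{r_x}(x)\big)<\infty$ (possible for a.e.\ radius since $\H^1(K)<\infty$); then the connected component $F$ of $K\cap\overline B_{r_x}(x)$ through $x$ satisfies $\H^1(F)\geq r_x>r$ (Lemmas \ref{lem:diam_vs_H1} and \ref{lem:touch_boundary}), the remainder $K\setminus\big(F\cap B_{r_x}(x)\big)$ has only finitely many components (at most $\H^0\big(K\cap\partial B_{r_x}(x)\big)$ of them, each compact), and $F$ meets the remainder in a finite set. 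Recursing on the leftover components, the procedure terminates after finitely many steps because each accepted piece carries measure at least $r$ and $\H^1(K)<\infty$. This yields pieces with pairwise $\H^1$-null (indeed finite) overlaps, so in fact $\sum_j\H^1(K_j)=\H^1(K)$ exactly, and each $K_j$ lies in the convex ball $\overline B_{r_{x_j}}(x_j)\subset U$, so its convex hull fits inside $U$ for free. Without an argument of this kind your proposal does not close.

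Two smaller remarks. First, the ``thin connecting tubes'' are unnecessary: the definition of $\kappa$ does not require $V$ to be connected, and in any case adjacent pieces of the decomposition intersect, so their (enlarged) convex hulls already overlap; dropping the tubes also removes the bookkeeping of their length. Second, your degenerate case ($\conv K$ with empty interior) needs no separate treatment, since Proposition \ref{prop:boundary_convex_hull} already covers it; the only genuinely separate trivial case is $\H^1(K)=+\infty$.
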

\begin{proof}
The case \(\H^1(K)=+\infty\) is trivial, thus let us suppose that \(\H^1(K)<+\infty\).
Fix an open neighbourhood \(U\subset\R^2\) of \(K\) and any \(\varepsilon>0\). 
We aim to prove that there exists an open set \(V\subset U\)
containing \(K\), whose boundary is a disjoint union of finitely many analytic Jordan loops,
such that
\begin{equation}\label{eq:Painleve_connected}
\H^1(\partial V)<2\,\H^1(K)+\varepsilon.
\end{equation}
Fix any positive radius \(r<\dist(K,\R^2\setminus U)\). For any \(x\in K\)
we can choose \(r_x>0\) such that \(r<r_x<\dist(K,\R^2\setminus U)\) and
\(\H^0\big(K\cap\partial B_{r_x}(x)\big)<+\infty\). 
We claim that:
\begin{equation}\label{eq:Painleve_connected_claim1}\begin{split}
&\text{There exist }x_0,\ldots,x_n\in K\text{ and compact connected sets }
x_i\in K_i\subset\bar B_{r_{x_i}}(x_i)\\
&\text{such that }K=K_0\cup\ldots\cup K_n\text{ and }
\H^1(K_i\cap K_j)=0\text{ whenever }0\leq i<j\leq n.
\end{split}\end{equation}
In order to show the validity of the claim \eqref{eq:Painleve_connected_claim1},
we need the following property that can be readily obtained as a consequence
of Lemmata \ref{lem:diam_vs_H1} and \ref{lem:touch_boundary}:\\
{\color{blue}\textsc{Fact.}} If \(E\) is a compact connected subset of \(K\)
and \(x\in E\) satisfies \(E\setminus\bar B_{r_x}(x)\neq\emptyset\), then the
connected component \(F\) of \(E\cap\bar B_{r_x}(x)\) containing \(x\) has
the following properties:
\begin{itemize}
\item[\(\rm i)\)] \(\H^1(F)\geq r_x>r\).
\item[\(\rm ii)\)] \(G\coloneqq E\setminus\big(F\cap B_{r_x}(x)\big)\)
is compact and has finitely many connected components (which accordingly
are compact as well). More precisely, the number of connected components
of \(G\) cannot exceed \(\H^0\big(K\cap\partial B_{r_x}(x)\big)\).
\item[\(\rm iii)\)] \(\H^0(F\cap G)<+\infty\).
\end{itemize}
We now recursively apply the above \textsc{Fact}. First of all,
fix any \(x_0\in K\). If \(K\subset\bar B_{r_{x_0}}(x_0)\)
then we define \(K_0\coloneqq K\). Otherwise, we call \(K_0\) the
connected component of \(K\cap\bar B_{r_{x_0}}(x_0)\) containing \(x_0\).
In the latter case, let us denote by \(E_1,\ldots,E_k\) the connected components
of the set \(K\setminus\big(K_0\cap B_{r_{x_0}}(x_0)\big)\). Given any \(i=1,\ldots,k\),
we pick a point \(x_i\in E_i\) and proceed as before:
if \(E_i\subset\bar B_{r_{x_i}}(x_i)\) then we set \(K_i\coloneqq E_i\);
otherwise, we call \(K_i\) the connected component of \(E_i\cap\bar B_{r_{x_i}}(x_i)\)
containing \(x_i\) and \(E_{i,1},\ldots,E_{i,k_i}\) the connected components
of the set \(E_i\setminus\big(K_i\cap B_{r_{x_i}}(x_i)\big)\).
We can repeat the same argument on each \(E_{i,j}\), and so on.
This iterated procedure must stop after finitely many passages thanks to item i)
of \textsc{Fact} (recall that \(\H^1(K)<+\infty\) and that the intersection
\(K_i\cap K_j\) has null \(\H^1\)-measure if \(i\neq j\)).
Therefore the previous argument provides us with a family \(K_0,\ldots,K_n\)
as in claim \eqref{eq:Painleve_connected_claim1}.

Let us now denote by \(\tilde C_i\) the convex hull of \(K_i\) for every \(i=0,\ldots,n\).
We know from Proposition \ref{prop:boundary_convex_hull} that
\(\H^1(\partial\tilde C_i)\leq 2\,\H^1(K_i)\). Moreover, recall that the
open \(\delta\)-neighbourhood \(\tilde C_i^\delta\) of \(\tilde C_i\) is convex for
all \(\delta>0\) and satisfies
\(\H^1(\partial\tilde C_i^\delta)\to\H^1(\partial\tilde C_i)\) as \(\delta\to 0\).
Furthermore, given any \(i=0,\ldots,n\) and any Borel set \(F\subset\R^2\)
with \(\H^1(F)<+\infty\), it holds that
\[\H^0(\partial\tilde C_i^\delta\cap F)<+\infty\quad\text{ for a.e.\ }\delta>0.\]
By \eqref{eq:Painleve_connected_claim1} we have that
each \(K_i\) is contained in the convex set \(\bar B_{r_{x_i}}(x_i)\),
whence accordingly the inclusions \(\tilde C_i\subset\bar B_{r_{x_i}}(x_i)\subset U\) hold
for every \(i=0,\ldots,n\). Hence we can recursively choose \(\delta_0,\ldots,\delta_n>0\)
so that (calling \(C_i\coloneqq\tilde C_i^{\delta_i}\)) the following properties
are verified:
\begin{itemize}
\item[\(\rm a)\)] \(C_i\subset U\) for every \(i=0,\ldots,n\),
\item[\(\rm b)\)] \(\H^1(\partial C_i)\leq 2\,\H^1(K_i)+\varepsilon/(n+1)\)
for every \(i=0,\ldots,n\).
\end{itemize}
Let us now define \(V\coloneqq C_0\cup\ldots\cup C_n\).
Then \(V\) is an open set such that \(K\subset V\subset U\)
and \(\partial V\subset\partial C_0\cup\ldots\cup\partial C_n\),
thus it holds that
\[\H^1(\partial V)\leq\sum_{i=0}^n \H^1(\partial C_i)
\overset{\rm b)}\leq
2\sum_{i=0}^n\H^1(K_i)+\varepsilon=2\,\H^1(K)+\varepsilon,\]
proving \eqref{eq:Painleve_connected}. 
The fact that the boundary of \(V\) can be supposed to be made of 
finitely many disjoint analytic Jordan loops is due to Proposition \ref{prop:boundary_smooth}.
This completes the proof.
\end{proof}
%
%
The estimate \eqref{eq:sharppainleve_connected} is easily seen to be sharp simply by taking $K$ to be a line-segment. What is less trivial, is that also the estimate \eqref{eq:sharppainleve} is sharp for general compact sets. This is shown in the next example.

\begin{example}\label{ex:example_sharp_painleve}
We define a compact fractal set $K\subset \R^2$ with $\kappa(K) = \pi\H^1(K)$ using an iteration procedure. We start with $K_1 = \overline{B}((0,0),1)$ and continue by contracting and copying $K_1$ as follows. For each integers $k$ and $j$ with $k \ge 2$ and $1 \le j \le 2^k$ we define, using complex notation, a contractive similitude
\[
f_{k,j}(x) := 2^{-k}x + (1-2^{-k})e^{j2^{1-k}\pi i}.
\]
Using these functions we set
\[
K_k := \bigcup_{(j_2,\dots,j_k)}f_{2,j_2}\circ f_{3,j_3} \circ \cdots \circ f_{k,j_k}(K_1),
\]
where the union runs over all $(k-1)$-tuples of indices with $1 \le j_i \le 2^k$, $i=2,\dots,k$. We call the balls in this union the construction balls of level $k$.
Notice that $K_{k+1}\subset K_k$ for every $k\in \N$. Finally, we set
\[
K := \bigcap_{k=1}^\infty K_k.
\]
See Figure \ref{fig:example} for an illustration of the construction.
\begin{figure}
    \centering
    \includegraphics[width=0.6\columnwidth]{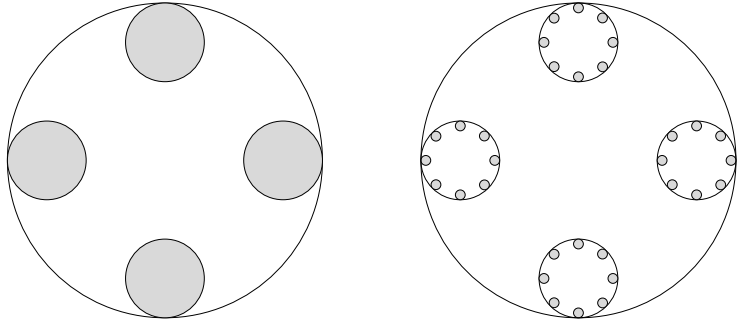}
    \caption{Here are approximations of the set $K$ defined in Example \ref{ex:example_sharp_painleve}.
    On the left is $K_2$ and on the right one step further refinement, the set $K_3$.}
    \label{fig:example}
\end{figure}
We claim that $\H^1(K) = 2$ and $\kappa(K) = 2\pi$. Taking into account \eqref{eq:sharppainleve}, it suffices to show that $\H^1(K) \le 2$ and $\kappa(K) \ge 2\pi$.

The inequality $\H^1(K) \le 2$ follows directly by using the construction balls of level $k$ as the cover for $K$ in the definition of the Hausdorff measure and by letting $k \to \infty$. Thus, it only remains to show that $\kappa(K) \ge 2\pi$.

Let $\varepsilon > 0$. We show that
\begin{equation}\label{eq:examplepainleve}
\kappa(K) \ge (1-\varepsilon)2\pi.
\end{equation}
By letting $U \subset \R^2$ be an open set containing $K$ such that each connected component of $U$ contains only one construction ball of level $k$, we may restrict ourselves to estimating $\kappa(K \cap B)$ for a construction ball $B$ of level $k$ with $k$ arbitrarily large. Let $V \subset U$ be open such that $K\cap B \subset V$. It suffices to show that for each connected component $V'$ of $V$
we have $\H^1(\partial V') \ge (1-\varepsilon)\pi\H^1(V'\cap K)$. Since $\H^1(\partial V') \ge \H^1(\partial W)$ for $W=\conv(V'\cap K)$, it is enough to show that $\H^1(\partial W) \ge (1-\varepsilon)\pi\H^1(V'\cap K)$.

Let $k_0$ be the smallest integer so that $W$ intersects at least $2$ of the level $k_0$ construction balls. By our assumption on $U$ we have that $k_0 > k$. The set $W$ is then contained in a level $k_0-1$ ball $B(x,r)$. We separate the rest of the proof into two cases:
\begin{itemize}
\item[\(\rm i)\)] $W$ intersects exactly $2$ level $k_0$ construction balls.
\item[\(\rm ii)\)] $W$ intersects at least $3$ level $k_0$ construction balls.
\end{itemize}

Let us first consider the case \(\rm i)\). Since the distance between two level $k_0$ construction balls is at least
\[
(1-2^{1-k_0})\sin(2^{-k_0}\pi) r \ge (1-2^{2-k_0})2^{-k_0}\pi r, 
\]
we may assume that $\H^1(V'\cap K) \ge 2^{-k_0}r$. Then, one of the construction balls contains a point of $\partial W$ that has distance at least $\frac32(1-2^{2-k_0})2^{-k_0}\pi r$ to the other construction ball. Thus, we may assume that $\H^1(V'\cap K) \ge \frac322^{-k_0}r$. But then, there exist two points in $\partial W$ with distance at least $2(1-2^{2-k_0})2^{-k_0}\pi r$, which then yields
\[
\H^1(\partial W) \ge (1-2^{2-k_0}) 2\pi r 2^{2-k_0} \ge (1-2^{2-k_0})\pi\H^1(V' \cap K).
\]

Let us then consider the case \(\rm ii)\). For each construction level $k_0$ ball $B_i$ intersecting $W$ there exists a point $x_i \in \partial W \cap B_i$, since none of the balls $B_i$ is in the convex hull of the other balls. Let us then estimate $\H^1(\partial W)$ using the angle around  the center $x$. If $x_i$ and $x_{j}$ are contained in adjacent construction balls, the boundary of $W$ from $x_i$ to $x_{j}$ has length at least
\[
(1-2^{1-k_0})\sin(\alpha_{i,j}) r \ge (1-2^{2-k_0})\alpha_{i,j} r,
\]
where $\alpha_{i,j} := \measuredangle(x_i,x,x_j)$. See Figure \ref{fig:example_proof} for an illustration for the estimate. If $x_i$ and $x_{j}$ are not contained in adjacent construction balls, the length of the boundary of $W$ from $x_i$ to $x_{j}$ is at least $2^{1-k_0}\pi$. All in all, denoting by $N$ the total number of the construction balls $B_i$ intersecting $W$, we have
\[
\H^1(\partial W) \ge (1-2^{2-k_0}) 2\pi r N 2^{-k_0} \ge (1-2^{2-k_0})\pi\H^1(V' \cap K).
\]
\begin{figure}
    \centering
    \includegraphics[width=0.45\columnwidth]{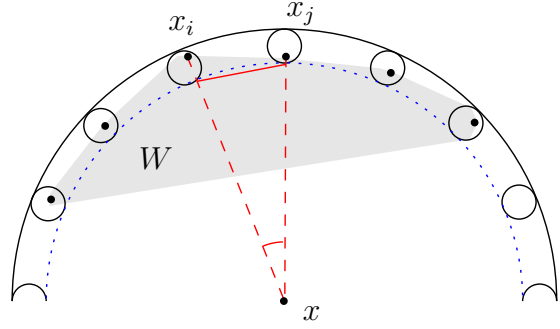}
    \caption{In the case \(\rm ii)\), the length of $\partial W$ is estimated by summing up lengths of projections (red line) of parts connecting points in consecutive balls (here between $x_i$ and $x_j$).}
    \label{fig:example_proof}
\end{figure}
\end{example}
\section{Proof of the main result}\label{sec:proof}
This section is entirely devoted to the proof of Theorem \ref{thm:main}:
\medskip

\noindent{\color{blue}\textsc{Step 1.}}
Let us denote by \(\{E_i\}_{i=1}^N\) the connected components of
\(\partial\Omega\) with positive length, where \(N\in\N\cup\{\infty\}\).
We can clearly suppose without loss of generality that \(N=\infty\).
In the case in which \(\Omega\) is bounded, we also assume that \(E_1\)
is the element containing the boundary of the unbounded connected
component of \(\R^2\setminus\overline\Omega\), which is connected as it
is a Jordan loop by Theorem \ref{thm:H1_finite_Jordan_domain}.
Set \(C\coloneqq\partial\Omega\setminus\bigcup_{i=1}^\infty E_i\).
Notice that \(C\) can be a Cantor-type set, thus in particular it can have
positive \(\H^1\)-measure. Lemma \ref{lem:diam_vs_H1} grants that
\begin{equation}\label{eq:main_thm_1}
\sum_{i=1}^\infty \diam(E_i)\leq\sum_{i=1}^\infty\H^1(E_i)
\leq\H^1(\partial\Omega)<+\infty.
\end{equation}
Consequently, we can relabel the sets \(\{E_i\}_{i\geq 2}\)
so that \(\diam(E_i)\geq\diam(E_j)\) if \(2\leq i\leq j\).

Let us fix \(\varepsilon>0\). For each \(i\in\N\), we select a point \(z_i\in E_i\).
Observe that
\[
\sum_{i=1}^\infty\H^1_\infty\big(B(z_i,4\,\diam(E_i))\big)
\leq 8\sum_{i=1}^\infty\diam(E_i)\leq 8\sum_{i=1}^\infty\H^1(E_i)
\leq 8\,\H^1(\partial\Omega)<+\infty.
\]
By using the Borel-Cantelli lemma we deduce that
\(\H^1_\infty\big(\bigcap_{k=1}^\infty\bigcup_{i=k}^\infty B(z_i,4\,\diam(E_i))\big)=0\).
Since \(\H^1\ll\H^1_\infty\) and the measure \(\H^1|_{\partial\Omega}\) is continuous
from above, we see that
\[
\lim_{k\to\infty}\H^1\Big(\partial\Omega\cap
\bigcup\nolimits_{i=k}^\infty B\big(z_i,4\,\diam(E_i)\big)\Big)
=\H^1\Big(\partial\Omega\cap\bigcap\nolimits_{k=1}^\infty\bigcup\nolimits_{i=k}^\infty
B\big(z_i,4\,\diam(E_i)\big)\Big)=0.
\]
Therefore, there exists \(k\in\N\) such that
\begin{equation}\label{eq:kcond}\begin{split}
\H^1\Big(\partial\Omega\cap\bigcup\nolimits_{i=k}^\infty B\big(z_i,4\,\diam(E_i)\big)\Big)
&<\frac{2\,\varepsilon}{5\,\pi},\\
\sum_{i=k}^\infty\diam(E_i)&<\frac{\varepsilon}{10\,\pi}.
\end{split}\end{equation}
{\color{blue}\textsc{Step 2.}}
Choose any continuous curve \(\gamma_1\subset\Omega\) joining \(x\) to \(y\).
Theorem \ref{thm:Painleve_conn} grants that for any \(i=1,\ldots,k-1\) we can
choose an open neighbourhood \(V_i\) of \(E_i\)
in such a way that
\[\begin{split}
\overline V_i\cap\overline V_j=\emptyset&\quad\text{ for every }1\leq i<j\leq k-1,\\
\overline V_i\cap\gamma_1=\emptyset&\quad\text{ for every }i=1,\ldots,k-1,\\
\H^1(\partial V_i)\leq 2\,\H^1(E_i)+\frac{2\,\varepsilon}{5\,(k-1)}&
\quad\text{ for every }i=1,\ldots,k-1.
\end{split}\]
We can also assume that the boundary of each set \(V_i\) consists of finitely
many pairwise disjoint Jordan loops. Notice that \(x,y\) lie in the same connected
component of \(\R^2\setminus\bigcup_{i=1}^{k-1}V_i\), thanks to the fact
that the curve \(\gamma_1\) does not intersect \(\bigcup_{i=1}^{k-1}V_i\).
We distinguish two cases:
\begin{itemize}
\item \(\Omega\) is bounded. Let us call \(\Omega'\) the (bounded) connected
component of \(\R^2\setminus\overline V_1\) that contains \(\gamma_1\)
(thus also \(x,y\)). The boundary of \(\Omega'\) is a Jordan loop
\(\sigma\colon[0,1]\to\R^2\) with
\(\ell(\sigma)\leq\H^1(\partial V_1)\leq 2\,\H^1(E_1)+\frac{2\,\varepsilon}{5(k-1)}\)
(cf.\ Theorem \ref{thm:H1_finite_Jordan_domain}). Possibly reparametrizing
\(\sigma\), we can suppose to have \(0<t_1<t_2<t_3<t_4<t_5<1\) such that
\[
x,y\in(\sigma_0,\sigma_{t_3})\subset\Omega',\quad
x\in(\sigma_{t_1},\sigma_{t_5})\subset\Omega',\quad
y\in(\sigma_{t_2},\sigma_{t_4})\subset\Omega'
\]
and the segments \([\sigma_{t_1},\sigma_{t_5}],[\sigma_{t_2},\sigma_{t_4}]\)
are perpendicular to \([x,y]\). It readily follows from Lemma \ref{lem:diam_vs_H1}
that \(|x-\sigma_{t_1}|\leq\ell(\sigma|_{[0,t_1]})\), 
\(|y-\sigma_{t_2}|\leq\ell(\sigma|_{[t_2,t_3]})\),
\(|y-\sigma_{t_4}|\leq\ell(\sigma|_{[t_3,t_4]})\) and 
\(|x-\sigma_{t_5}|\leq\ell(\sigma|_{[t_5,0]})\).
Calling \(*\) the concatenation of curves, we thus see that
\[
\ell\big([x,\sigma_{t_1}]*\sigma|_{[t_1,t_2]}*[\sigma_{t_2},y]\big)
+\ell\big([x,\sigma_{t_5}]*\sigma|_{[t_5,t_4]}*[\sigma_{t_4},y]\big)\leq\ell(\sigma).
\]
Then we define \(s\colon[0,1]\to\R^2\) as the shortest curve between
\([x,\sigma_{t_1}]*\sigma|_{[t_1,t_2]}*[\sigma_{t_2},y]\) and
\([x,\sigma_{t_5}]*\sigma|_{[t_5,t_4]}*[\sigma_{t_4},y]\), so
that \(\ell(s)\leq\H^1(E_1)+\frac{\varepsilon}{5(k-1)}\). See Figure \ref{fig:initial} for the curve $s$.
\begin{figure}
    \centering
    \includegraphics[width=0.6\columnwidth]{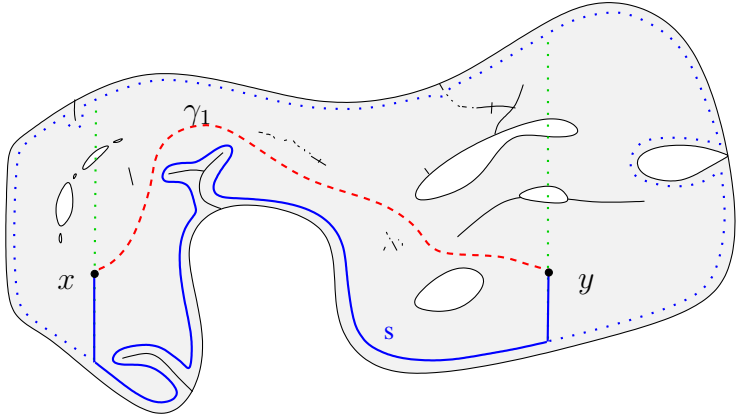}
    \caption{The first approximation $s$ of the curve is obtained in the bounded case by going near the outer boundary. In the unbounded case, the initial curve is just the line-segment connecting the points.}
    \label{fig:initial}
\end{figure}
\item \(\Omega\) is unbounded. Then we define \(s\colon[0,1]\to\R^2\)
as \(s_t\coloneqq x+t(y-x)\) for all \(t\in[0,1]\).
\end{itemize}
For the sake of simplicity, let us define the quantity \(q>0\) as
\begin{equation}\label{eq:def_q}
q\coloneqq\left\{\begin{array}{ll}
0\\
|x-y|
\end{array}\quad\begin{array}{ll}
\text{ if }\Omega\text{ is bounded,}\\
\text{ if }\Omega\text{ is unbounded.}
\end{array}\right.
\end{equation}
\begin{figure}
    \centering
    \includegraphics[width=0.6\columnwidth]{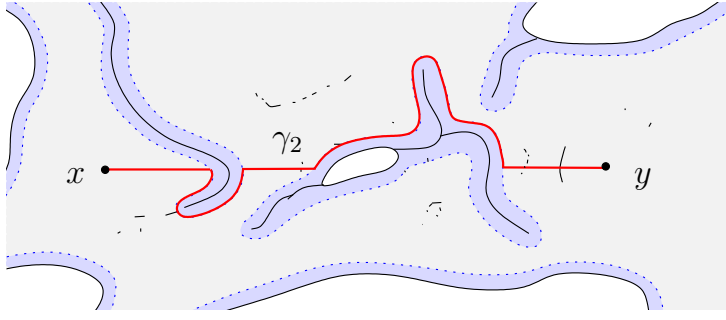}
    \caption{In the second approximation, the curve is constructed so that it avoids a finite number of the largest boundary components. Next, it is slightly perturbed so that it does not intersect the (remaining) boundary points in a positive $\mathcal H^1$-measure set.}
    \label{fig:avoid_large}
\end{figure}

We proceed in a recursive way: choose that \(i_1\in\{1,\ldots,k-1\}\)
such that \(V_{i_1}\) is the first element of \(\{V_i\}_{i=1}^{k-1}\)
that is encountered by the curve \(s\) (note that \(i_1\geq 2\) if \(\Omega\) is bounded).
Put \(a_1\coloneqq\min\big\{t\in(0,1)\,\big|\,s_t\in\partial V_{i_1}\big\}\).
The connected component of \(\partial V_{i_1}\) containing \(s_{a_1}\) is the image
of a Jordan loop \(\sigma_1\). Now let us call
\(b_1\coloneqq\max\big\{t\in(a_1,1)\,\big|\,s_t\in\sigma_1\big\}\).
Observe that \(s|_{(b_1,1)}\cap\partial V_{i_1}=\emptyset\).
We can write the image of \(\sigma_1\) as the union of two injective
curves \(\alpha^1,\tilde\alpha^1\) joining \(s_{a_1}\) to \(s_{b_1}\).
Given that the length of \(\sigma_1\) does not exceed \(\H^1(\partial V_{i_1})\),
which in turn is smaller than \(2\,\H^1(E_{i_1})+\frac{2\,\varepsilon}{5(k-1)}\),
we can assume without loss of generality that the length of \(\alpha^1\)
is smaller than \(\H^1(E_{i_1})+\frac{\varepsilon}{5(k-1)}\).

We can now argue in the same way starting from \(s_{b_1}\). Take
\(i_2\in\{1,\ldots,k-1\}\setminus\{i_1\}\) such that the first of
the sets \(V_i\) that we meet while going from \(s_{b_1}\) to \(y\) is
\(V_{i_2}\) (again, \(i_2\neq 1\) if \(\Omega\) is bounded).
We denote by \(a_2\) the smallest \(t\in(b_1,1)\) for which
\(s_t\in\partial V_{i_2}\); the connected component of \(\partial V_{i_2}\) containing
\(s_{a_2}\) is the image of a Jordan loop \(\sigma_2\), and \(b_2\)
stands for the biggest \(t\in(a_2,1)\) such that \(s_t\in\sigma_2\).
Then we can find a curve \(\alpha^2\) in \(\sigma_2\) joining \(s_{a_2}\)
to \(s_{b_2}\), which is shorther than \(\H^1(E_{i_2})+\frac{\varepsilon}{5(k-1)}\).
By repeating this procedure finitely many times (see Figure \ref{fig:avoid_large}), we obtain a curve \(\gamma_2\)
of the form
\[
\gamma_2\coloneqq s|_{[0,a_1]}*\alpha^1*s|_{[b_1,a_2]}*\alpha^2*\ldots*
s|_{[b_{\ell-1},a_\ell]}*\alpha^\ell*s|_{[b_\ell,1]}
\]
for some \(\ell\leq k-1\).
Notice that \(\gamma_2\) is contained in \(\R^2\setminus\bigcup_{i=1}^{k-1}E_i\)
and connects \(x\) to \(y\). By combining the previous estimates, we also deduce that
\begin{equation}\label{eq:curve_gamma2}
\ell(\gamma_2)<q+\sum_{i=1}^{k-1}\H^1(E_i)+\frac{\varepsilon}{5}.
\end{equation}
{\color{blue}\textsc{Step 3.}}
In light of \eqref{eq:curve_gamma2}, we can choose some points
\(p_1,\ldots,p_{2h-1}\in\gamma_2\setminus\{x,y\}\) having the following property:
the curve \(\gamma_3\coloneqq[x,p_1]*[p_1,p_2]*\ldots*[p_{2h-2},p_{2h-1}]*[p_{2h-1},y]\)
is contained in \(\R^2\setminus\bigcup_{i=1}^{k-1}E_i\) and satisfies
\[
\ell(\gamma_3)<q+\sum_{i=1}^{k-1}\H^1(E_i)+\frac{\varepsilon}{5}.
\]
Now let us apply Proposition \ref{prop:move_points}: we can find some points
\(q_1,q_3,\ldots,q_{2h-1}\) (sufficiently near to \(p_1,p_3,\ldots,p_{2h-1}\),
respectively) for which the following conditions are verified:
\begin{itemize}
\item The curve
\(\gamma_4\coloneqq[x,q_1]*[q_1,p_2]*\ldots*[p_{2h-2},q_{2h-1}]*[q_{2h-1},y]\)
satisfies
\begin{equation}\label{eq:curve_gamma4}
\ell(\gamma_4)<q+\sum_{i=1}^{k-1}\H^1(E_i)+\frac{\varepsilon}{5},
\end{equation}
\item \(\gamma_4\) is contained in \(\R^2\setminus\bigcup_{i=1}^{k-1}E_i\),
\item the set \(\gamma_4\cap\partial\Omega\) has null \(\H^1\)-measure.
\end{itemize}
By upper continuity of \(\H^1|_{\partial\Omega}\),
we can find \(\delta>0\) such that
\(B(\gamma_4,2\,\delta)\subset\R^2\setminus\bigcup_{i=1}^{k-1}E_i\) and
\begin{equation}\label{eq:nbhd_gamma4}
\H^1\big(\partial\Omega\cap B(\gamma_4,2\,\delta)\big)<\frac{2\,\varepsilon}{5\,\pi}.
\end{equation}
Theorem \ref{thm:Painleve_conn} provides an open neighbourhood
\(U'\subset B(\gamma_4,\delta)\setminus\bigcup_{i=1}^{k-1}E_i\) of \(\gamma_4\) such that
\begin{equation}\label{eq:estimate_U'}
\H^1(\partial U')\leq 2\,\ell(\gamma_4)+\frac{2\,\varepsilon}{5}
\leq 2\,q+2\sum_{i=1}^{k-1}\H^1(E_i)+\frac{4\,\varepsilon}{5},
\end{equation}
where the second inequality stems from \eqref{eq:curve_gamma4}.
Moreover, let us fix any index \(m\geq k\) for which
\(\diam(E_m)<\dist(\gamma_4,\partial U')\). Since \(i\mapsto\diam(E_i)\)
is non-increasing for \(i\geq k\), one has
\begin{equation}\label{eq:diam_Ei_small}
\diam(E_i)<\dist(\gamma_4,\partial U')\quad\text{ for every }i\geq m.
\end{equation}
Let us define
\[
U\coloneqq U'\cup\bigcup_{i=k}^m B\big(z_i,2\,\diam(E_i)\big).
\]
See Figure \ref{fig:final} for an illustration of the set $U$.
\begin{figure}
    \centering
    \includegraphics[width=0.6\columnwidth]{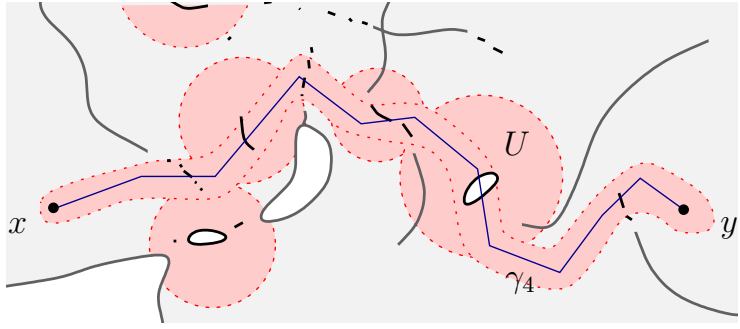}
    \caption{The final curve is found inside a set $U$ that is obtained as the union of a neighbourhood of the curve $\gamma_4$ and suitable collection of balls. The neighbourhood of $\gamma_4$ allows us to avoid the largest pieces of the boundary, which the curve $\gamma_4$ avoided. By taking balls around the remaining boundary parts that have so large diameter that they could block inside the previous neighbourhood, we are guaranteed to be able to connect $x$ to $y$ inside $U$.}
    \label{fig:final}
\end{figure}
Given that \(\overline U\subset B(\gamma_4,2\,\delta)\cup
\bigcup_{i=k}^m B\big(z_i,4\,\diam(E_i)\big)\),
we deduce from the first line of \eqref{eq:kcond} and from \eqref{eq:nbhd_gamma4} that
\begin{equation}\label{eq:bdry_Omega_in_U}
\H^1(\partial\Omega\cap\overline U)<\frac{4\,\varepsilon}{5\,\pi}.
\end{equation}
{\color{blue}\textsc{Step 4.}} We claim that
\begin{equation}\label{eq:main_thm_claim}
x,y\text{ belong to the same connected component of }U\setminus\partial\Omega.
\end{equation}
We argue by contradiction: suppose that \(y\) does not belong to the connected
component \(A\) of \(U\setminus\partial\Omega\) containing \(x\). Call \(B\)
the connected component of \(\R^2\setminus A\) that contains \(y\) (notice that
\(y\) lies in the interior of \(B\)). Call \(F\) the connected component of
\(\partial A\) that is included in \(B\). Hence, Lemma \ref{lem:fact_HH} yields
\(F=\partial B\). Given that \(\gamma_4\) joins \(x\notin B\) to \(y\in B\),
we deduce that \(\gamma_4\cap\partial B\neq\emptyset\). Choose any
\(z\in\gamma_4\cap\partial B\). Observe that
\(\partial B\subset\partial A\subset\partial U\cup\partial\Omega\),
thus the fact that \(\gamma_4\subset U\) gives \(z\in\partial\Omega\).
Call \(E\) the connected component of \(\partial\Omega\) containing \(z\).
Given that \(\gamma_4\subset\R^2\setminus\bigcup_{i=1}^{k-1}E_i\), we have that
either \(E=E_i\) for some \(i\geq k\) or \(E\subset C\). To prove that
\(E\cap\partial U=\emptyset\) we distinguish the following three cases:
\begin{itemize}
\item[\(\rm i)\)] \(E=E_i\) for some \(i=k,\ldots,m\). Then it holds
\(E\subset B\big(z_i,2\,\diam(E_i)\big)\subset U\), whence accordingly
\(E\cap\partial U=\emptyset\).
\item[\(\rm ii)\)] \(E=E_i\) for some \(i>m\). Since
\(\diam(E)<\dist(\gamma_4,\partial U')\) by \eqref{eq:diam_Ei_small}
and \(\gamma_4\cap E\neq\emptyset\), we see that \(E\subset U'\subset U\)
and thus \(E\cap\partial U=\emptyset\).
\item[\(\rm iii)\)] \(E\subset C\). Then \(E\) is a non-empty connected set with
null diameter, namely a singleton, so that clearly \(E\cap\partial U=\emptyset\).
\end{itemize}
Therefore, \(E\) is also a connected component of \(\partial U\cup\partial\Omega\)
and accordingly \(\partial B\subset E\). The set \(\R^2\setminus\partial B\)
cannot be connected, otherwise we would have
\(\mathring B=\R^2\setminus\partial B\) and thus \(B=\R^2\).
Therefore, \(\R^2\setminus\partial B\) has at least two connected components:
one coincides with \(\mathring B\) (thus contains \(y\)), while another one contains
the point \(x\). Then any curve joining \(x\) to \(y\) must intersect
\(\partial B\subset\partial\Omega\), which is in contradiction with the assumption that
\(\Omega\) is connected. Consequently, the claim \eqref{eq:main_thm_claim} is proven.\\
{\color{blue}\textsc{Step 5.}} Thanks to \eqref{eq:main_thm_claim}, we can find
a continuous curve \(\gamma_5\subset U\setminus\partial\Omega\) joining \(x\)
to \(y\). The Painlev\'{e} estimate (for general compact sets),
namely Proposition \ref{prop:sharppainleve}, provides us
with an open neighbourhood \(V\) of \(\partial\Omega\cap\overline U\) such
that \(\overline V\cap\gamma_5=\emptyset\) and
\begin{equation}\label{eq:estimate_bdry_V}
\H^1(\partial V)\leq\pi\,\H^1(\partial\Omega\cap\overline U)<\frac{4\,\varepsilon}{5},
\end{equation}
where the second inequality is a consequence of \eqref{eq:bdry_Omega_in_U}.
Let us denote by \(W'\) the connected component of \(U\setminus\overline V\)
containing \(\gamma_5\). Note that \(\partial W'\subset\partial U'\cup\partial V\cup
\bigcup_{i=k}^m\partial B\big(z_i,2\,\diam(E_i)\big)\).
Therefore, by combining the estimates in \eqref{eq:estimate_U'}, in
\eqref{eq:estimate_bdry_V} and in the second line of \eqref{eq:kcond}, we conclude that
\(\H^1(\partial W')<2\big(q+\sum_{i=1}^{k-1}\H^1(E_i)+\varepsilon\big)\).
Since \(\gamma_5\subset W'\subset\overline{W'}\subset\R^2\setminus\partial\Omega\),
we can apply Proposition \ref{prop:boundary_smooth} to obtain a bounded domain
\(W\subset\R^2\) with \(\gamma_5\subset W\subset\Omega\), whose boundary is the
disjoint union of finitely many smooth Jordan loops and such that
\begin{equation}\label{eq:estimate_W}
\H^1(\partial W)<2\Big(q+\sum\nolimits_{i=1}^{k-1}\H^1(E_i)+\varepsilon\Big).
\end{equation}
We call \(\lambda\) the boundary of the unbounded connected component of
\(\R^2\setminus\overline W\), while by \(\{\lambda_j\}_{j\in J}\) (for some
finite family of indices \(J\)) we denote the boundaries of the bounded connected
components of \(\R^2\setminus\overline W\). Let us also define
\(\Lambda\coloneqq\bigcup_{j\in J}\lambda_j\).\\
{\color{blue}\textsc{Step 6.}}
Call \(L_x\) and \(L_y\) the lines orthogonal to \([x,y]\) that pass
through \(x\) and \(y\), respectively. Take those points \(u_1,u_2,u_3,u_4\in\lambda\)
such that \(x\in[u_1,u_3]\subset L_x\), \(y\in[u_2,u_4]\subset L_y\),
and \((u_1,u_3)\cap\lambda,(u_2,u_4)\cap\lambda=\emptyset\). We can suppose
that \(u_1,u_2\) lie in the same connected component of \(\R^2\setminus\R(y-x)\)
(thus \(u_3,u_4\) are contained in the other one). By \(\wideparen{u_1 u_2}\) we
mean the arc in \(\lambda\) joining \(u_1\) to \(u_2\) that does not contain
any other point \(u_i\), similarly for \(\wideparen{u_3 u_4}\) and so on.
The set \(\R(y-x)\setminus[x,y]\) is the union of two half-lines; both of them
intersect the curve \(\lambda\), say at some points \(u_5\in\wideparen{u_1 u_3}\)
and \(u_6\in\wideparen{u_2 u_4}\). By Lemma \ref{lem:diam_vs_H1} we see that
\begin{equation}\label{eq:estimate_segments}\begin{split}
|x-u_1|\leq\H^1(\wideparen{u_1 u_5}),\quad
&|y-u_2|\leq\H^1(\wideparen{u_2 u_6}),\quad\\
|x-u_3|\leq\H^1(\wideparen{u_3 u_5}),\quad
&|y-u_4|\leq\H^1(\wideparen{u_4 u_6}).
\end{split}\end{equation}
Let us define the curves \(\gamma_6,\gamma_7\) as
\[
\gamma_6\coloneqq[x,u_1]*\wideparen{u_1 u_2}*[u_2,y],\qquad
\gamma_7\coloneqq[x,u_3]*\wideparen{u_3 u_4}*[u_4,y].
\]
Therefore, \eqref{eq:estimate_segments} ensures that
\(\ell(\gamma_6)+\ell(\gamma_7)\leq\ell(\lambda)\), whence (possibly relabeling
\(\gamma_6\) and \(\gamma_7\)) it holds that \(\ell(\gamma_6)\leq\ell(\lambda)/2\).
Finally, it can be readily checked that it is possible to find a curve
\(\gamma\subset\gamma_6\cup\Lambda\) joining \(x\) to \(y\) such that
\(\gamma\cap\partial\Omega=\emptyset\) (thus \(\gamma\subset\Omega\))
and \(\H^1(\gamma\cap\Lambda)\leq\H^1(\Lambda)/2\). Consequently, we deduce that
\(\ell(\gamma)\leq\ell(\gamma_6)+\H^1(\gamma\cap\Lambda)\leq
\big(\ell(\lambda)+\H^1(\Lambda)\big)/2=\H^1(\partial W)/2\).
By recalling the inequality \eqref{eq:estimate_W}, we conclude that
\(\ell(\gamma)\leq q+\sum_{i=1}^{k-1}\H^1(E_i)+\varepsilon\).
In view of \eqref{eq:def_q}, this explicitly means that
\[
\ell(\gamma)\leq\left\{\begin{array}{ll}
\sum_{i=1}^{k-1}\H^1(E_i)+\varepsilon\\
|x-y|+\sum_{i=1}^{k-1}\H^1(E_i)+\varepsilon
\end{array}\quad\begin{array}{ll}
\text{ if }\Omega\text{ is bounded,}\\
\text{ if }\Omega\text{ is unbounded.}
\end{array}\right.
\]
By arbitrariness of \(\varepsilon>0\), this completes the proof of Theorem \ref{thm:main}.
\bibliographystyle{amsplain}
\bibliography{references}

\end{document}